  \renewenvironment{thebibliography}[1]{
    \begin{oldthebibliography}{#1}
      \setlength{\parskip}{0ex}
      \setlength{\itemsep}{0ex}
  }
  {
    \end{oldthebibliography}
  }
\begin{document}
\newtheorem{Thm}{Theorem}   
\def\thetheorem{\unskip}
\def\theproposition{\unskip}
\def\theconjecture{\unskip}
\newtheorem{Cor}[Thm]{Corollary}
\newtheorem{Lem}[Thm]{Lemma}
\def\thelemma{\unskip}
\theoremstyle{definition}
\newtheorem{Defn}{Definition}
\def\thedefinition{\unskip}
\newtheorem{Notation}[Defn]{Notation}
\newtheorem{remark}[Defn]{Remark}
\def\theremark{\unskip}
\def\thequestion{\unskip}
\newtheorem{Exam}[Defn]{Example}
\def\theexample{\unskip}
\numberwithin{Thm}{section} \numberwithin{Defn}{section}
\newtheoremstyle{case}{}{}{}{}{}{.}{ }{}
\theoremstyle{case}
\newtheorem{case}{Case}

\def\hal{\unskip\nobreak\hfil\penalty50\hskip10pt\hbox{}\nobreak
\hfill\vrule height 5pt width 6pt depth 1pt\par\vskip 2mm}

\renewcommand{\labelenumi}{(\roman{enumi})}
\newcommand{\A}{\mathfrak{A}}
\newcommand{\mfb}{\mathfrak{b}}
\newcommand{\mfC}{\mathfrak{C}}
\newcommand{\mfD}{\mathfrak{D}}
\newcommand{\mfe}{\mathfrak{e}}
\newcommand{\mff}{\mathfrak{f}}
\newcommand{\mfg}{\mathfrak{g}}
\newcommand{\gl}{\mathfrak {gl}}
\newcommand{\mfh}{\mathfrak{h}}
\newcommand{\mfl}{\mathfrak{l}}
\newcommand{\mfm}{\mathfrak{m}}
\newcommand{\mfp}{\mathfrak{p}}
\newcommand{\mfS}{\mathfrak{S}}
\newcommand{\so}{\mathfrak{so}}
\newcommand{\ssp}{\mathfrak{sp}}
\newcommand{\mfsl}{\mathfrak{sl}}
\newcommand{\mft}{\mathfrak{t}}
\newcommand{\mfX}{\mathfrak{X}}
\newcommand{\mfz}{\mathfrak{z}}
\newcommand{\mfu}{\mathfrak{u}}
\newcommand{\mbA}{\mathbb{A}}
\newcommand{\mbE}{\mathbb{E}}
\newcommand{\mFp}{\mathbb{F}_{p}}
\newcommand{\mFq}{\mathbb{F}_{q}}
\newcommand{\mbG}{\mathbb{G}}
\newcommand{\mbP}{\mathbb{P}}
\newcommand{\mbR}{\mathbb{R}}
\newcommand{\mbV}{\mathbb{V}}
\newcommand{\mbZ}{\mathbb{Z}}
\newcommand{\mbN}{\mathbb{N}}

\newcommand{\mcA}{\mathcal{A}}
\newcommand{\mcB}{\mathcal{B}}
\newcommand{\mcC}{\mathcal{C}}
\newcommand{\mcE}{\mathcal{E}}
\newcommand{\mcF}{\mathcal{F}}
\newcommand{\mcG}{\mathcal{G}}
\newcommand{\mcH}{\mathcal{H}}
\newcommand{\mcK}{\mathcal{K}}
\newcommand{\mcM}{\mathcal{M}}
\newcommand{\mcO}{\mathcal{O}}
\newcommand{\mcP}{\mathcal{P}}
\newcommand{\mcQ}{\mathcal{Q}}
\newcommand{\mcR}{\mathcal{R}}
\newcommand{\mcS}{\mathcal{S}}
\newcommand{\mcU}{\mathcal{U}}
\newcommand{\mcZ}{\mathcal{Z}}
\newcommand{\msC}{\mathscr{C}}
\newcommand{\msE}{\mathscr{E}}
\newcommand{\msH}{\mathscr{H}}
\newcommand{\mcJ}{\mathscr{J}}
\newcommand{\msN}{\mathscr{N}}
\newcommand{\msR}{\mathscr{R}}
\newcommand{\msS}{\mathscr{S}}
\newcommand{\msW}{\mathscr{W}}
\newcommand{\B}{\mathrm{B}}
\newcommand{\C}{\mathrm{C}}
\newcommand{\cx}{\mathrm{cx}}
\newcommand{\mrcom}{\mathrm{Com}}
\newcommand{\D}{\mathrm{D}}
\newcommand{\mrdim}{\mathrm{dim}}
\newcommand{\srk}{\mathrm{srk}}
\newcommand{\rank}{\mathrm{rank}}
\newcommand{\mrlie}{\mathrm{Lie}}
\newcommand{\mrlt}{\mathrm{LT}}
\newcommand{\mmax}{\mathrm{max}}
\newcommand{\mrmax}{\mathrm{Max}}
\newcommand{\mrmin}{\mathrm{min}}
\newcommand{\nil}{\mathrm{nil}}
\newcommand{\mrP}{\mathrm{P}}
\newcommand{\mrpr}{\mathrm{pr}}
\newcommand{\proj}{\mathrm{Proj}}
\newcommand{\rk}{\mathrm{rk}}
\newcommand{\mrrk}{\mathrm{rk_{ss}}}
\newcommand{\mrrkp}{\mathrm{rk_{p}}}
\newcommand{\mrstab}{\mathrm{Stab}}
\newcommand{\mrrad}{\mathrm{rad}}
\newcommand{\mrsp}{\mathrm{Span}}
\newcommand{\mrV}{\mathrm{V}}
\newcommand{\HHG}{H^{\cdot}(G,\Bk)}
\newcommand{\HHE}{H^{\cdot}(E,\Bk)}
\newcommand{\HHcG}{H^{\cdot}(\mcG,\Bk)}
\newcommand{\Bk}{\Bbbk}
\newcommand{\iotaE}{\iota_{*,\mcE}(V_{r}(\mcE))}
\newcommand{\ck}{\mathrm{char}(\Bk)}
\newcommand{\Ap}{\Bk{\mbZ/p\mbZ}}
\newcommand{\ug}{\underline{\mfg}}
\newcommand{\cnil}{\mfC^{\nil}}

\def\Ab{\operatorname{Ab}\nolimits}
\def\Ad{\operatorname{Ad}\nolimits}
\def\ad{\operatorname{ad}\nolimits}
\def\d{\operatorname{\bf{d}}\nolimits}
\def\deg{\operatorname{deg}\nolimits}
\def\ev{\operatorname{ev}\nolimits}
\def\exp{\operatorname{exp}\nolimits}
\def\Ens{\operatorname{Ens}\nolimits}
\def\Ext{\operatorname{Ext}\nolimits}
\def\GL{\operatorname{GL}\nolimits}
\def\Gr{\operatorname{Gr}\nolimits}
\def\gr{\operatorname{gr}\nolimits}
\def\id{\operatorname{id}\nolimits}
\def\Im{\operatorname{Im}\nolimits}
\def\ker{\operatorname{ker}\nolimits}
\def\im{\operatorname{im}\nolimits}
\def\Ker{\operatorname{Ker}\nolimits}
\def\Mat{\operatorname{Mat}\nolimits}
\def\mult{\operatorname{mult}\nolimits}
\def\odd{\operatorname{odd}\nolimits}
\def\ord{\operatorname{ord}\nolimits}
\def\PSL{\operatorname{PSL}\nolimits}
\def\Rad{\operatorname{Rad}\nolimits}
\def\res{\operatorname{res}\nolimits}
\def\SL{\operatorname{SL}\nolimits}
\def\SO{\operatorname{SO}\nolimits}
\def\SP{\operatorname{SP}\nolimits}

\newenvironment{changemargin}[1]{%
  \begin{list}{}{%
    \setlength{\topsep}{0pt}%
    \setlength{\topmargin}{#1}%
    \setlength{\listparindent}{\parindent}%
    \setlength{\itemindent}{\parindent}%
    \setlength{\parsep}{\parskip}%
  }%
  \item[]}{\end{list}}

\parindent=0pt
\addtolength{\parskip}{0.5\baselineskip}

\subjclass[2010]{Primary 17B50, 14L15}
\title{Saturation rank for nilradical of parabolic subalgebras in Type A }
\author{Yang Pan}
\address{Wuxi Institute of Technology, Wuxi 214121, P.R.China} \email{ypan@outlook.de}

\pagestyle{plain}
\begin{abstract}
Let $\mfp(d)$ be a standard parabolic subalgebra of $\mfsl_{n+1}(K)$ and 
$\mfu$ be the corresponding nilradical defined over an algebraically 
closed field $K$ of characteristic $p>0$. We construct a finite connected quiver $Q(d)$, through which we provide a combinatorial characterization of the centralizer $c_{\mfu}(x(d))$ of the Richardson element $x(d)$.  
We specifically  focus on the centralizer  when the levi factor of $\mfp(d)$ is determined by either one or two simple roots.
This allows us to demonstrate that, under certain mild restrictions, the saturation rank of $\mfu$ equals the semisimple rank of the algebraic 
$K$-group $\SL_{n+1}(K)$.
\end{abstract}
\maketitle

\section{Introduction}
Let $(\mfg,[p])$ be a finite-dimensional restricted Lie algebra defined
over an algebraically closed field $K$ of characteristic $p>0$.
The restricted nullcone of $\mfg$ is the fiber of zero of the $[p]$-map, which is $V(\mfg)=\{ x\in \mfg \mid x^{[p]}=0\}$.
The subset
\[ 
\mbE(r,\mfg) : = \{\mfe \in \Gr_{r}(\mfg) \mid  [\mfe, \mfe]=0, \mfe \subset V(\mfg)\}
\]
of the Grassmannian $\Gr_{r}(\mfg)$ of $r$-planes, introduced in \cite{CFP} is closed and hence a projective variety.
We write the union of elements of $\mbE(r,\mfg)$ as 
$V_{\mbE(r,\mfg)}:= \bigcup_{\mfe \in \mbE(r, \mfg)}\mfe$, which is contained in the conical variety $V(\mfg)$.
We consider an important invariant of restricted Lie algebras $\mfg$, denoted as $\srk(\mfg)$, and call it the saturation rank.
This rank is defined by
\[ 
\srk(\mfg):=\mmax \{r \in \mbN \mid V(\mfg) = V_{\mbE(r,\mfg)}\}.
\]
When restricting a $\mathfrak{g}$-module to elements of $\mbE(r,\mfg)$  for a certain rank $r$ within a restricted Lie algebra $\mathfrak{g}$, it is crucial to ensure that no information is lost in comparison to its restricted nullcone $V(\mfg)$. This is the pivotal moment where the saturation rank takes center stage.
As demonstrated in \cite{Far}, it has been established that the Carlson module $L_{\zeta}$ remains indecomposable when the saturation rank of 
$\mathfrak{g}$ satisfies $\srk(\mathfrak{g})\geq 2$.
A prototypical case occurs when $\mfg$ is the algebraic Lie algebras of reductive algebraic groups $G$, which implies that
$\srk(\mfg)=\rk_{ss}(G)$ is the semisimple rank of a reductive algebraic group under some mild restrictions (cf. \cite{Pan}). In other cases, such 
as the algebraic Lie algebras of non-reductive groups, the saturation rank remains unknown.

Let $G$ be a reductive algebraic group over an algebraically closed field $K$ and let $P$ be a parabolic subgroup of $G$ with unipotent radical $U$. We write $\mfg$, $\mfp$ and $\mfu$ for the Lie algebras of $G$,$P$ and $U$ respectively. The established fact that $G$ exhibits finitely many nilpotent orbits within the Lie algebra $\mfg$ is widely acknowledged:this was initially proved by Richardson under the condition of either char$K$ is zero or good for $G$; we direct interested readers to consult \cite{Car} for an overview of the result in bad characteristic. 
It follows that there is a unique nilpotent orbit 
$G \cdot e$ which intersects $\mfu$ in an open dense subvariety. Richardson's dense orbit theorem tells us that the intersection 
$G\cdot e \cap \mfu = P\cdot e$ is a single $P$-orbit (we may assume that
$e\in \mfu$). The $P$-orbit $P \cdot e$ is called the Richardson orbit and its elements are called Richardson elements (cf. \cite{Ric}).

The purpose of this paper is to determine the saturation rank of the
nilpotent radical $\mfu$ in case $G=\SL_{n+1}(K)$. 
For any parabolic subgroup $P$ of $G$, there is a unique
dimension vector $d$ such that $P$ is conjugate to $P(d)$, where $d$
gives the sizes of the blocks in the Levi subgroup of $P$
containing maximal torus $T$. Therefore, in what follows it suffices to just consider parabolic subgroups of the form $P(d)$. 
In view of Lemma \ref{SD}, under some mild restrictions, the saturation rank $\srk(\mfu)$ is determined by the local saturation rank of Richardson elements.
The construction work of  such elements has been elucidated by Brüstle et al. in \cite{BHRR}. Furthermore, Baur et al. describe a normal form for Richardson elements in the classical case in \cite{Bau,BG}.

We write  $\mfp=\mfp(d)$ as the Lie algebra of $P(d)$ where $d =(d_{1},\ldots,d_{r})$ is the corresponding dimension vector. The Richardson element which is obtained through a horizontal line diagram $L_{h}(d)$ (see \cite{Bau} for more details) is now expressed by
\[
x(d)=x(L_{h}(d))=\sum_{i-j} e_{i,j}.
\]
We conclude this introduction with a succinct overview of the contents covered in our paper. In Section 2, we present an alternative characterization of the nilradical 
$\mfu$ of $\mfp(d)$ and introduce a method for identifying elements that commute with the Richardson element $x(d)$ for a given  dimension vector $d$. Section 3 deals with the  centralizer of  $x(d)$ when the Levi factor of $\mfp(d)$ is determined by either one or two simple roots. Finally we show that, in Section 4, the saturation rank of $\mfu$ coincides with the semisimple rank of the group $G=\SL_{n+1}(K)$  subject to certain constraints.
Consequently, we conclude that the Carlson module $L_{\zeta}$, acting as a module over $U_{0}(\mfu)$, remains indecomposable for $n\geq 2$ and when the characteristic of $K$ is greater than or equal to $n+1$.

{\bf Acknowledgement.} 
This work is supported by the National Natural Science Foundation of China(No.12201171).
The findings presented in this paper are derived from the author's doctoral thesis, which was undertaken at the University of Kiel. The author would
like to thank his advisor, Rolf Farnsteiner, for his continuous support.

\section{Finite quiver arising from the dimension vector }
Let $G=\SL_{n+1}(K)$ be the special linear group over an algebraically closed field $K$. Let $T$ be maximal torus of $G$ and $P(d)$ a standard 
parabolic subgroup of $G$ containing $T$. 
Let $\Phi$ be the root system of $G$ with respect to $T$, 
$\Delta=\{\alpha_1,\ldots,\alpha_n\}$ be the base of $\Phi$ and $\Phi^{+}$ be the set of positive roots.
We consider the  parabolic subalgebra $\mfp(d)=\mrlie(P(d))$ with its decomposition $\mfp(d)=\mfm\oplus \mfu$.
Let $\Phi(\mfm)\subseteq \Phi$ be the  closed subsystem of $\Phi$ determined by the levi factor $\mfm$  and 
$\Delta(\mfm)\subseteq \Delta$ be the base of $\Phi(\mfm)$. Then the nilpotent radical
$\mfu$ can be writen as $\mfu=\bigoplus_{\alpha \in \Phi^{+}\setminus \Phi(\mfm)^{+}}\mfg_{\alpha}$
where  $\mfg_{\alpha}$ is the root subspace of $\mfg$ corresponding to 
$\alpha$. In this section, our study centers on quivers $\Gamma:= (\Gamma_{0}, \Gamma_{1})$ that exhibit a lack of loops or multiple arrows where
$\Gamma_{0}=\{ 1,2,\dots, n+1\}$. For such quivers, the set of arrows is  represented as a subset $\Gamma_{1} \subseteq \Gamma_{0} \times \Gamma_{0}$, stemming from the Cartesian product of the set of vertices $\Gamma_{0}$, and 
two maps $s,t: \Gamma_{1}\mapsto \Gamma_{0}$ which associated to each arrow $\alpha\in \Gamma_{1}$ its source $s(\alpha)\in \Gamma_{0}$ and its target 
$t(\alpha)\in \Gamma_{0}$, respectively.

\begin{Defn}
Let $d=(d_{1},\ldots,d_{r})$ be a dimension vector associated to $\mfp(d)$. Arrange $r$ columns of $d_{i}$
dots, top-adjusted. 
Given $(a,b)\in \Gamma_{0}\times \Gamma_{0}$, there is an arrow 
$\alpha:a \rightarrow b $ if $a-b$ is a horizontal line in $L_{h}(d)$  or $a,b$ are from two adjacent columns and the column where
$a$ lies in is on the left. Let $\Gamma_{1}$ be the set of arrows, then 
$(\Gamma_{0},\Gamma_{1})$ is a locally connected finite quiver, denoted by $Q(d)$.
\end{Defn}

\begin{Exam}
Considering the parabolic subalgebra $\mfp(d)$ of $\mfsl_{9}(K)$ with dimension vector $d=(3,1,2,3)$, then $Q(d)$ is as follows:
\[\begin{tikzcd}
	1\circ & \stackrel{4}{\circ} & \stackrel{5}{\circ} & \circ 7\\
	2\circ && \underset{6}{\circ} & \circ 8 \\
	3\circ &&& \circ 9
	\arrow[from=1-1, to=1-2]
	\arrow[from=1-2, to=1-3]
	\arrow[from=1-3, to=1-4]
	\arrow[from=2-1, to=1-2]
     \arrow[from=2-1, to=2-3]
	\arrow[from=3-1, to=1-2]
	\arrow[from=3-1, to=3-4]
	\arrow[from=1-3, to=2-4]
	\arrow[from=1-3, to=3-4]
	\arrow[from=2-3, to=1-4]
	\arrow[from=2-3, to=2-4]
	\arrow[from=2-3, to=3-4]
	\arrow[from=1-2, to=2-3]
\end{tikzcd}\]
\end{Exam}

\begin{Thm}{\label{map1}}
Let $\mfu$ be the nilpotent radical of a standard parabolic subalgebra $\mfp(d)$. There exists an  admissible ideal $J$ of path algebra $KQ(d)$ generated by all commutativity relations  $\omega_{1}-\omega_{2}$ such that $\mfu \cong radKQ(d)/ J$ as Lie algebras.
\end{Thm}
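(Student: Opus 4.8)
The plan is to set up an explicit $K$-linear bijection between the nilradical $\mfu=\bigoplus_{\alpha\in\Phi^+\setminus\Phi(\mfm)^+}\mfg_\alpha$ and the arrow-generated part $\operatorname{rad}KQ(d)/J$ of a suitable quotient of the path algebra, and then verify that it is a Lie algebra homomorphism. First I would identify the vertex set $\Gamma_0=\{1,\dots,n+1\}$ with the standard basis indices of $K^{n+1}$, so that a path $p$ from $a$ to $b$ in $Q(d)$ corresponds to a candidate matrix unit $e_{b,a}$ (with the convention matching the sum $x(d)=\sum_{i-j}e_{i,j}$ in the introduction). By the definition of $Q(d)$, an arrow $a\to b$ exists precisely when $a-b$ is a horizontal line in $L_h(d)$ or when $a$ and $b$ lie in adjacent columns with $a$'s column to the left; one checks that in both cases $e_{b,a}\in\mfu$, i.e. the corresponding positive root lies in $\Phi^+\setminus\Phi(\mfm)^+$. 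Conversely every root space $\mfg_\alpha$ in $\mfu$ is hit by such an arrow (the ``adjacent column'' arrows already supply every $e_{b,a}$ with $b$'s column strictly left of $a$'s, while horizontal lines are built into $L_h(d)$), so the arrows are in bijection with a spanning set of $\mfu$; longer paths will correspond to matrix units $e_{b,a}$ where $a,b$ are separated by two or more columns.

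Next I would pin down the ideal $J$. Define the algebra map $\Psi\colon KQ(d)\to \mathrm{Mat}_{n+1}(K)$ sending the trivial path $e_i$ to the diagonal idempotent $E_{ii}$ and each arrow $a\to b$ to the matrix unit $e_{b,a}$; composition of composable arrows goes to the product of matrix units, and non-composable products go to $0$, so $\Psi$ is a well-defined algebra homomorphism. Its image is exactly $K\cdot(\bigoplus_i E_{ii})\oplus \mfu$ as an associative algebra, and restricting to $\operatorname{rad}KQ(d)$ (paths of length $\geq 1$) gives a surjection onto $\mfu$. The kernel $J:=\ker\Psi\cap\operatorname{rad}KQ(d)$ is generated by commutativity relations: whenever two directed paths $\omega_1,\omega_2$ share the same source $a$ and target $b$, both map to $e_{b,a}$, so $\omega_1-\omega_2\in J$; and whenever a path is ``too long'' — its source and target columns coincide or the target column is not strictly to the left — it maps to $0$, but I would argue (using the structure of $L_h(d)$) that such zero-paths are themselves consequences of commutativity relations together with the relation that closes up a column, so that $J$ is generated by the $\omega_1-\omega_2$ alone and is admissible (contains $\operatorname{rad}^N KQ(d)$ for $N$ large, since $Q(d)$ has at most $r$ columns and any path of length $\geq r$ is redundant). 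This identifies $\operatorname{rad}KQ(d)/J\xrightarrow{\sim}\mfu$ as $K$-vector spaces.

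Finally I would upgrade this to a Lie algebra isomorphism. On $\operatorname{rad}KQ(d)/J$ the bracket is the commutator inherited from the associative quotient algebra $KQ(d)/J$; under $\Psi$ this is carried to the commutator bracket on $\mfu\subseteq\mathrm{Mat}_{n+1}(K)$, which is exactly the Lie bracket of $\mfu$ as a subalgebra of $\mfsl_{n+1}(K)$. Since $\Psi$ is an associative-algebra homomorphism it automatically respects commutators, so the induced map $\operatorname{rad}KQ(d)/J\to\mfu$ is a Lie algebra isomorphism.

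The main obstacle I anticipate is the bookkeeping in the middle step: showing that $J$ is \emph{generated by commutativity relations} and is \emph{admissible}, rather than merely being the kernel. Concretely, one must show that every path mapping to $0$ under $\Psi$ (i.e. a path whose endpoints are not joined by any matrix unit in $\mfu$, such as one returning within a single column or crossing into the Levi block) already lies in the ideal generated by the differences $\omega_1-\omega_2$ — this requires a careful combinatorial analysis of how horizontal lines in $L_h(d)$ interact with the adjacent-column arrows, and is where the precise shape of the quiver $Q(d)$ (its local connectedness and the top-adjusted column arrangement) really gets used.
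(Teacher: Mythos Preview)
Your approach is essentially the paper's: define the algebra map $\operatorname{rad}KQ(d)\to\mfu$ sending a path $\rho$ to the matrix unit indexed by its endpoints, check surjectivity, identify the kernel with the commutativity relations, and then observe that an associative homomorphism automatically respects commutator brackets.

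The obstacle you anticipate, however, does not arise. Every arrow of $Q(d)$---whether a horizontal-line arrow or an adjacent-column arrow---points from some column to a column strictly further to the right, so any path of positive length has its source and target in distinct columns with the source column to the left; hence every such path maps to a \emph{nonzero} matrix unit lying in $\mfu$, and there are no ``zero paths'' to account for. The kernel is then simply the $K$-span of the differences $\omega_1-\omega_2$ of parallel paths, and this span is already a two-sided ideal (pre- or post-composing a parallel pair by a path yields another parallel pair). One small convention note: the paper sends $\rho$ to $e_{s(\rho),t(\rho)}$ rather than $e_{t(\rho),s(\rho)}$; with your convention the map becomes an anti-homomorphism for the usual left-to-right path product, which still preserves Lie brackets up to a harmless sign.
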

\begin{proof}
We first construct an algebra homomorphism 
\begin{eqnarray*}
       \varphi: \;\; \mrrad KQ(d)  & \longrightarrow & \mfu  \\
            \rho  & \mapsto & e_{s(\rho),t(\rho)}
\end{eqnarray*}
Recall that $\mfu=\bigoplus_{\alpha \in \Phi^{+}\setminus \Phi(\mfm)^{+}}\mfg_{\alpha}$, the indices $i,j$ of the root vector $x_{\alpha}=e_{i,j}$ 
for  $\alpha=\epsilon_{i}-\epsilon_{j}$ in $\Phi^{+}\setminus \Phi(\mfm)^{+}$ existing as vertices in $Q(d)$ is connected by a path $\rho$ since $i,j$ are from different columns.  We claim that $\varphi$ is well-defined  and surjective. Given two paths $\omega_{1}$ and $\omega_{2}$, if $\varphi(\omega_{1})=\varphi(\omega_{2})$, then $s(\omega_{1})=s(\omega_{2})$
and $t(\omega_1)=t(\omega_2)$.
Then the map $\varphi$ admits an admissible ideal generated by all commutativity relations $\omega_1-\omega_2$ as a kernel. Hence we conclude that the map $\varphi$ defined in the statement is an isomorphism.

The  Lie structure on $\mrrad KQ(d)$, which is defined as $[x,y]_{Q}=xy-yx$ for $x,y\in \mrrad KQ(d)$, where $xy$ and $yx$  are obtaind by the product of two paths in path algebra $KQ(d)$. It is obvious that $\varphi$ is compatible with Lie brackets, i.e. $\varphi[x,y]_{Q}=[\varphi(x), \varphi(y)]$, so the map $\varphi$ is an isomorphism as Lie algebras.
\end{proof}

\begin{Exam}
Let $d=(1,2,1)$ be the dimension vector and $Q(d)$ be the corresponding quiver
\[\begin{tikzcd}
	1\circ & \stackrel{2}{\circ} & \circ 4 \\
	& \underset{3}{\circ}
	\arrow["\alpha", from=1-1, to=1-2]
	\arrow["\beta", from=1-2, to=1-3]
	\arrow["\gamma"', from=1-1, to=2-2]
	\arrow["\delta"', from=2-2, to=1-3]
\end{tikzcd}\]
The $K$-algebra homomorphism $\varphi:  \mrrad KQ(d) \longrightarrow  \mfu$ is
defined by 
\begin{align*}
\varphi(\alpha)& =e_{1,2}, & \varphi(\beta)&=e_{2,4}, & \varphi(\gamma)&=e_{1,3},\\
\varphi(\delta)&=e_{3,4}, & \varphi(\alpha\beta)&=\varphi(\gamma\delta)=e_{1,4}.
\end{align*}
Here, we see that $\varphi$ is a surjection and  
$\Ker\;\varphi=<\alpha\beta- \gamma\delta>=J$. 
Hence, $\mrrad KQ(d)/J \cong \mfu$.
\end{Exam}

Given a vertex $x\in \Gamma_{0}$ in $Q(d)$, we put
\[
x^{+}:=\{ y\in  \Gamma_{0}\mid x\rightarrow y \;\mbox{is a horizontal arrow in}\; Q(d)\}
\]
\[
x^{-}:=\{ y\in \Gamma_{0} \mid  y\rightarrow x  \; \mbox{is a horizontal arrow in}\; Q(d) \},
\]
so that $x^{+}$ and $x^{-}$ are the subsets of successor and predecessor of the vertex $x$ in $Q(d)$, respectively. Obviously, 
$x^{+}$ (resp. $x^{-}$) is a singleton or an empty set, so we may identify the set $x^{+}$ (resp. $x^{-}$) with its element when it is non-empty without any ambiguous. If we have two vertices $a,b \in \Gamma_{0}$ in $Q(d)$ with $a<b$, we use $l(a)$(resp. $l(b)$) to indicate the line number in which $a$(resp. b) lies in the quiver $Q(d)$. If $l(a)=l(b)$, then $e_{a,b}$ is a summand of $x(d)$, which allows us to rewrite $x(d)=x_{1}+x_{2}+\cdots+x_{s}$ for some integer $s$
with 
\[
x_{i}=\sum_{\substack{a-b \\ l(a)=l(b)=i}}e_{a,b}
\]

\begin{Lem}{\label{sp}}
Let $x(d)$ be a Richardson element of $\mfp(d) $ written as $x(d)=x_{1}+x_{2}+\cdots+x_{s}$. Given an element
\[
x=\sum_{\epsilon_{i}-\epsilon_{j} \in  \Phi^{+}\setminus \Phi(\mfm)^{+} }k_{i,j}e_{i,j} 
\]
of $\mfu$ with  $ k_{a,b}\neq 0$  and $l(a)\neq l(b)$. If $[x(d),x]=0$, then we have the following three statements:
\begin{itemize}
\item[(1)] If $a^{-}\neq \emptyset$, then $b^{-}\neq \emptyset $ and $k_{a^{-},b^{-1}}=k_{a,b}$.
\item[(2)] If $b^{+}\neq \emptyset$, then $a^{+} \neq \emptyset $ and $k_{a^{+}, b^{+}}=k_{a,b}$.
\item[(3)] If $a^{-}=b^{+}=\emptyset$, then  $[x(d),x- k_{a,b}e_{a,b}]=0$.
\end{itemize}
\end{Lem}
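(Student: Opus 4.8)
The plan is to use the description $\mfu \cong \mrrad KQ(d)/J$ from Theorem \ref{map1}, so that the commutator $[x(d),x]$ becomes an identity of linear combinations of paths in the quiver, and then match coefficients path-by-path. Writing $x(d) = \sum_{l(a)=l(b)} e_{a,b}$ (a sum over horizontal arrows) and $x = \sum k_{i,j} e_{i,j}$, the bracket $[x(d),x] = x(d)x - x\,x(d)$ decomposes as a sum of products of a horizontal arrow with an arbitrary arrow. Concretely, in terms of matrix units $[e_{p,q}, e_{i,j}] = \delta_{q,i} e_{p,j} - \delta_{j,p} e_{i,q}$. So the coefficient of a fixed matrix unit $e_{u,v}$ in $[x(d),x]$ picks up contributions exactly from the (at most one) horizontal arrow ending at $u$ composed with an arrow starting at $u$, and from an arrow ending at $v$ composed with the (at most one) horizontal arrow starting at $v$. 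This is where the notation $x^{\pm}$ does its work: $u^{-}$ is the unique predecessor of $u$ along a horizontal arrow (if any), and $v^{+}$ the unique horizontal successor of $v$.

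First I would fix a pair $(a,b)$ with $k_{a,b}\neq 0$ and $l(a)\neq l(b)$, and examine the coefficient of $e_{a^{-},b}$ in $[x(d),x]=0$ when $a^{-}\neq\emptyset$. The horizontal arrow $a^{-}\to a$ composed with $e_{a,b}$ contributes $k_{a,b}$ to this coefficient; the only other possible contribution comes from a term $e_{a^{-},c}$ composed with a horizontal arrow $c\to b$, i.e. from $c=b^{-}$, contributing $-k_{a^{-},b^{-}}$. Setting the total to zero forces $b^{-}\neq\emptyset$ (otherwise the coefficient is just $k_{a,b}\neq 0$, a contradiction) and then $k_{a^{-},b^{-}} = k_{a,b}$, which is statement (1). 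Statement (2) is the mirror image: I would look at the coefficient of $e_{a,b^{+}}$, where the contributions are $e_{a,b}$ followed by the horizontal arrow $b\to b^{+}$ giving $+k_{a,b}$, against a horizontal arrow $a\to a^{+}$ followed by $e_{a^{+},b^{+}}$ giving $-k_{a^{+},b^{+}}$; vanishing forces $a^{+}\neq\emptyset$ and $k_{a^{+},b^{+}}=k_{a,b}$. For statement (3), when $a^{-}=b^{+}=\emptyset$ the element $e_{a,b}$ is a ``source-sink'' term: no horizontal arrow ends at $a$ and none starts at $b$, so $e_{a,b}$ cannot be produced from, nor can it produce, any other matrix unit under bracketing with a horizontal summand of $x(d)$. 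Hence the contribution of $e_{a,b}$ to $[x(d),x]$ is orthogonal (in the matrix-unit basis) to the contribution of $x - k_{a,b}e_{a,b}$, and since the whole bracket vanishes each piece must vanish separately; thus $[x(d), x - k_{a,b}e_{a,b}]=0$.

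The one point requiring care — and the main potential obstacle — is verifying that the contributions I have listed are genuinely the \emph{only} ones, i.e. that no other arrow of $Q(d)$ can conspire to feed into the matrix units $e_{a^{-},b}$, $e_{a,b^{+}}$, or $e_{a,b}$. This is really a statement about the combinatorics of $Q(d)$: a horizontal arrow has its source and target in the same row and in adjacent columns (indeed, $x^{+}$ and $x^{-}$ are singletons or empty by the remark preceding the lemma), so composing $e_{a,b}$ on the left or right with a horizontal summand of $x(d)$ lands in a tightly constrained place. I would make this precise by noting that $x(d)x = \sum_{i} x_i x$ where each $x_i$ is supported on horizontal arrows in row $i$, and $x_i\,e_{a,b}$ is nonzero only if $a$ has a horizontal predecessor in row $i$, which pins down $i=l(a^{-})$ and the term to $k_{a,b}\,e_{a^{-},b}$; symmetrically for $x\,x(d)$. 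Once this bookkeeping is in place, comparing coefficients of the relevant matrix units yields (1), (2), (3) directly. Throughout I would keep in mind the sign conventions for $\mfsl_{n+1}$ and the fact that we work modulo the commutativity ideal $J$, though since $e_{a^{-},b}$, $e_{a,b^{+}}$ and $e_{a,b}$ are honest basis vectors of $\mfu$ the passage through $J$ causes no trouble.
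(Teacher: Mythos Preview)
Your approach is correct and is essentially the paper's own argument: both isolate the coefficient of the matrix unit $e_{a^{-},b}$ (respectively $e_{a,b^{+}}$) in $[x(d),x]$, observe that at most two terms of the bracket can land there, and equate them; the paper phrases this as splitting off $k_{a,b}e_{a,b}$ and locating the unique cancelling term in $[x(d),x']$, which is the same computation. Two cosmetic remarks: in your treatment of (2) both signs are reversed (the product $k_{a,b}e_{a,b}\cdot e_{b,b^{+}}$ lies in $x\,x(d)$ and hence contributes $-k_{a,b}$ to $[x(d),x]=x(d)x-x\,x(d)$, and dually for the other term), but since both flip simultaneously the conclusion $k_{a^{+},b^{+}}=k_{a,b}$ is unaffected; and for (3) the orthogonality phrasing is more than needed, since $a^{-}=b^{+}=\emptyset$ gives $[x(d),e_{a,b}]=0$ outright, whence $[x(d),x-k_{a,b}e_{a,b}]=[x(d),x]=0$ directly.
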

\begin{proof}
We first have the statement: $[x_{i}, e_{a,b}]=0$ for $i \neq l(a),l(b)$. Then we have 
\[
[x(d),x]=[x_{l(a)}, k_{a,b}e_{a,b}] + [x_{l(b)}, k_{a,b}e_{a,b}]+[x(d),x^{'}]=0
\]
where $x^{'}=x-k_{a,b}e_{a,b}$.
If $a^{-}\neq \emptyset$, then $[x_{l(a)}, k_{a,b}e_{a,b}]=k_{a,b}e_{a^{-},b}$. Since $-k_{a,b}e_{a^{-}, b}$ cannot appear as a summand of  
$[x_{l(b)}, k_{a,b}e_{a,b}]$, which enforces the term $k_{a,b}e_{a^{-},b}$ existing as a summand of $x^{'}\cdot x(d)$.
Specifically, we should have $k_{a^{-},c}e_{a^{-},c} \cdot  e_{c,b}=k_{a,b}e_{a^{-1},b}$ for some $c$ where $k_{a^{-},c}e_{a^{-},c}$ is a term of
$x^{'}$ and $e_{c,b}$ is a term of $x(d)$. By the construction of $x(d)$,  we have $c=b^{-}$. Therefore, $b^{-}\neq \emptyset $ and $k_{a^{-},b^{-}}=k_{a,b}$,
which proves statement (1).

If $b^{+}\neq \emptyset$,  then $[x_{l(b)}, k_{a,b}e_{a,b}]=-k_{a,b}e_{a,b^{+}}$. By the same token, we  shall have  $a^{+}\neq \emptyset$ and 
$k_{a^{+}, b^{+}}=k_{a,b}$. Instead of offering the proof of assertion (2), we opt to present a diagram in case $l(a) < l(b)$.

\[\begin{tikzcd}
	{l(a)} & \cdots & {\stackrel{a^{-}}{\circ}} & {\stackrel{a}{\circ}} & {\stackrel{a^{+}}{\circ}} & \cdots & \cdots \\
	{l(b)} & \cdots & \cdots & {\underset{b^{-}}{\circ}} & {\underset{b}{\circ}} & {\underset{b^{+}}{\circ}} & \cdots
	\arrow[from=1-3, to=1-4]
	\arrow[from=1-4, to=2-5]
	\arrow[from=1-3, to=2-4]
	\arrow[from=2-4, to=2-5]
	\arrow[from=1-2, to=1-3]
	\arrow[from=1-4, to=1-5]
	\arrow[from=2-5, to=2-6]
	\arrow[from=1-5, to=2-6]
	\arrow[from=2-6, to=2-7]
\end{tikzcd}\]
If $a^{-}=b^{+}=\emptyset$, then $[x_{l(a)}, k_{a,b}e_{a,b}]= [x_{l(b)}, k_{a,b}e_{a,b}]=0$, implying $[x(d),x^{'}]=0$, which proves statement (3).
\end{proof}

\begin{remark}{\label{r1}}
Theorem \ref{map1} and Lemma \ref{sp} provides us with a method to identify all elements that commute with the Richardson element $x(d)$. For example, let's consider the parabolic subalgebra $\mfp(d)\subseteq \mfsl_{9}(K)$ defined by the dimension vector $d=(3,2,3,1)$. In this case, $\Delta(\mfm)=\{\alpha_{1},\alpha_{2},\alpha_{4},\alpha_{6},\alpha_{7}\}$.
Apart from powers of the Richardson element $x(d)$, the following ten diagrams provide the generators in $\mfu$ that commute with $x(d)$:
\[
\begin{tikzcd}[ampersand replacement=\&,cramped,column sep=small]
	1 \& 4 \& 6 \& 9 \&\& 1 \& 4 \& 6 \& 9 \&\& 1 \& 4 \& 6 \& 9 \\
	2 \& 5 \& 7 \&\&\& 2 \& 5 \& 7 \&\&\& 2 \& 5 \& 7 \\
	\\
	1 \& 4 \& 6 \& 9 \&\& 1 \& 4 \& 6 \& 9 \&\& 2 \& 5 \& 7 \\
	2 \& 5 \& 7 \&\&\& 2 \& 5 \& 7 \&\&\& 3 \&\& 8 \\
	\\
	2 \& 5 \& 7 \&\&\& 1 \& 4 \& 6 \& 9 \&\& 1 \& 4 \& 6 \& 9 \\
	3 \&\& 8 \&\&\& 3 \&\& 8 \&\&\& 3 \&\& 8 \\
	\\
	1 \& 4 \& 6 \& 9 \\
	3 \&\& 8
	\arrow[no head, from=1-1, to=1-2]
	\arrow[from=1-1, to=2-2]
	\arrow[no head, from=1-2, to=1-3]
	\arrow[from=1-2, to=2-3]
	\arrow[no head, from=1-3, to=1-4]
	\arrow[no head, from=1-6, to=1-7]
	\arrow[from=1-6, to=2-8]
	\arrow[no head, from=1-7, to=1-8]
	\arrow[no head, from=1-8, to=1-9]
	\arrow[no head, from=1-11, to=1-12]
	\arrow[no head, from=1-12, to=1-13]
	\arrow[no head, from=1-13, to=1-14]
	\arrow[no head, from=2-1, to=2-2]
	\arrow[no head, from=2-2, to=2-3]
	\arrow[no head, from=2-6, to=2-7]
	\arrow[no head, from=2-7, to=2-8]
	\arrow[from=2-11, to=1-12]
	\arrow[no head, from=2-11, to=2-12]
	\arrow[from=2-12, to=1-13]
	\arrow[no head, from=2-12, to=2-13]
	\arrow[from=2-13, to=1-14]
	\arrow[no head, from=4-1, to=4-2]
	\arrow[no head, from=4-2, to=4-3]
	\arrow[no head, from=4-3, to=4-4]
	\arrow[no head, from=4-6, to=4-7]
	\arrow[no head, from=4-7, to=4-8]
	\arrow[no head, from=4-8, to=4-9]
	\arrow[no head, from=4-11, to=4-12]
	\arrow[from=4-11, to=5-13]
	\arrow[no head, from=4-12, to=4-13]
	\arrow[from=5-1, to=4-3]
	\arrow[no head, from=5-1, to=5-2]
	\arrow[from=5-2, to=4-4]
	\arrow[no head, from=5-2, to=5-3]
	\arrow[from=5-6, to=4-9]
	\arrow[no head, from=5-6, to=5-7]
	\arrow[no head, from=5-7, to=5-8]
	\arrow[no head, from=5-11, to=5-13]
	\arrow[no head, from=7-1, to=7-2]
	\arrow[no head, from=7-2, to=7-3]
	\arrow[no head, from=7-6, to=7-7]
	\arrow[from=7-6, to=8-8]
	\arrow[no head, from=7-7, to=7-8]
	\arrow[no head, from=7-8, to=7-9]
	\arrow[no head, from=7-11, to=7-12]
	\arrow[no head, from=7-12, to=7-13]
	\arrow[no head, from=7-13, to=7-14]
	\arrow[from=8-1, to=7-3]
	\arrow[no head, from=8-1, to=8-3]
	\arrow[no head, from=8-6, to=8-8]
	\arrow[from=8-11, to=7-13]
	\arrow[no head, from=8-11, to=8-13]
	\arrow[from=8-13, to=7-14]
	\arrow[no head, from=10-1, to=10-2]
	\arrow[no head, from=10-2, to=10-3]
	\arrow[no head, from=10-3, to=10-4]
	\arrow[from=11-1, to=10-4]
	\arrow[no head, from=11-1, to=11-3]
\end{tikzcd}
\]
The corresponding elements, from left to right and from top to bottom, are
\begin{align*}
e_{1,5}+e_{4,7} && e_{1,7} && e_{2,4}+e_{5,6}+e_{7,9} && e_{2,6}+e_{5,9} && e_{2,9}& \\
e_{2,8} & &  e_{3,7} &&  e_{1,8} &&  e_{3,6}+e_{8,9} && e_{3,9} & 
\end{align*}
\end{remark}
\section{Centralizers of Richardson elements with $|\Delta(\mfm)|\leq 2$}
Given $x(d)$ as a Richardson element of $\mfp(d)$,
through the natural embedding of $\mfsl_{n+1}(K)=\mrlie(\SL_{n+1}(K))$ in 
$\gl(\mbV)$ with $\dim \mbV = n+1$, each Richardson element 
$x(d)$ becomes a nilpotent element in
$\gl(\mbV)$, and therefore a nilpotent endomorphism of $\mbV$. 
We can therefore associate to $x(d)$ a partition $\pi$, written
sometimes in the form $(\lambda_{1}\geq \lambda_{2}\geq \cdots\geq \lambda_{r}>0)$, and sometimes as  $[1^{r_{1}}2^{r_{2}}3^{r_{3}}\ldots]$,
given by the sizes of the blocks in the Jordan normal form of $x(d)$.

\begin{Lem}{\label{JF}}
The partition of the Richardson element $x(d)$ is, up to a permutation,
\begin{itemize}
\item[(1)] $\pi=[1,n]$ provided $|\Delta(\mfm)|=1$.
\item[(2)] $\pi=[2, n-1]$ or $\pi=[1^{2},n-1]$ provided $|\Delta(\mfm)|=2$.
\end{itemize}
\end{Lem}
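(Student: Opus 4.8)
The plan is to read off the Jordan type of $x(d)$ directly from the combinatorics of the horizontal line diagram $L_h(d)$, using the well-known dictionary between Richardson elements in type $A$ and line diagrams: the Jordan blocks of $x(d) = \sum_{i-j} e_{i,j}$ correspond to the connected horizontal strings in $L_h(d)$, with a string passing through $k$ dots contributing a single Jordan block of size $k$. Equivalently, via Theorem \ref{map1}, $x(d)$ is the sum of horizontal arrows in $Q(d)$, and its Jordan type is the multiset of lengths of the maximal horizontal paths. So the whole computation reduces to describing these maximal horizontal paths when $\Delta(\mfm)$ is small.

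First I would dispose of case (1). If $|\Delta(\mfm)| = 1$, then $\mfm$ contributes exactly one simple root, so the dimension vector is, up to placement of the block of size $2$, $d = (\ldots, 1, 2, 1, \ldots)$ with all other entries equal to $1$; reordering columns (which permutes the partition, as the statement allows) we may take $d = (2, 1, 1, \ldots, 1)$ on $n+1$ dots, i.e. two rows, the top row of length $r$ and the bottom row of length $1$. The line diagram then has the top row forming a single horizontal string through all $r = n$ top dots and the bottom dot isolated, giving $\pi = [1, n]$. (I would double-check the degenerate small cases $n=1,2$ but they fit the same description.)

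Next, case (2): $|\Delta(\mfm)| = 2$ splits into two subcases according to whether the two simple roots of $\mfm$ are adjacent or not. If they are non-adjacent, $d$ has two separated blocks of size $2$ and all other entries $1$; up to permuting columns, $d = (2,2,1,\ldots,1)$, the diagram has two columns of height $2$ at the front, so $L_h(d)$ has the top row as one string through $n-1$ dots and the bottom row as one string through $2$ dots, giving $\pi = [2, n-1]$. If the two simple roots are adjacent, $d$ has one block of size $3$, so up to permutation $d = (3,1,\ldots,1)$; the diagram has one column of height $3$, the top row is a string through $n-1$ dots, and the two lower dots are isolated, giving $\pi = [1^2, n-1]$. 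In each case I would exhibit the explicit line diagram and count string lengths, and note that the total $\sum \lambda_i = n+1$ provides a consistency check.

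The only genuine subtlety — and the step I expect to require the most care — is justifying the reduction ``up to a permutation of $d$'' rigorously: permuting the columns of $d$ changes $\mfp(d)$ to a different (non-conjugate) parabolic in general, but the claim is only about the \emph{partition} of its Richardson element, and one must argue that this partition depends only on the multiset of column heights of $L_h(d)$, not on their order. This follows because the top-adjusted arrangement makes row $i$ of $L_h(d)$ a single maximal horizontal string whose length is $\#\{\,k : d_k \ge i\,\}$, a quantity manifestly invariant under permuting the $d_k$; hence the Jordan type is the conjugate partition $d^{\mathsf T}$ of $d$ (as a partition), and one just specializes $d^{\mathsf T}$ in the two cases above. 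I would state this as the key lemma, prove it in one line from the definition of $L_h(d)$ and $x(d)$, and then the two enumerated cases are immediate.
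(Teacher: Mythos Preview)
Your argument is correct and takes a genuinely different route from the paper. The paper proves the lemma by rank computations: it counts the edges of $L_h(d)$ to obtain $\rank(x(d))$, reads off the number of Jordan blocks as $(n+1)-\rank(x(d))$, and then pins down the block sizes via the second differences $\rank(x(d)^{m-1})-2\rank(x(d)^m)+\rank(x(d)^{m+1})$. You instead observe that each row of $L_h(d)$ is a single Jordan string for $x(d)$, of length $\#\{k: d_k\ge i\}$, so that the Jordan type of $x(d)$ is the conjugate partition $d^{\mathsf T}$; both cases then drop out by inspection. Your method is more conceptual and yields the general formula $\pi(x(d))=d^{\mathsf T}$ for arbitrary $d$ at no extra cost, whereas the paper's approach is more hands-on linear algebra that stays closer to the matrix and needs no structural facts about $L_h(d)$ beyond an edge count.

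One small remark: the phrase ``up to a permutation'' in the statement refers only to the ordering of the parts of $\pi$, not to permuting the entries of $d$. Since your key observation already gives $\pi=d^{\mathsf T}$ for the actual $d$ at hand, the reduction to a reordered $d$ in your first two paragraphs is unnecessary; your final paragraph (that $d^{\mathsf T}$ depends only on the multiset $\{d_k\}$) is exactly the clean statement, and you could lead with it and skip the column-permutation detour entirely.
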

\begin{proof}
(1) Since $|\Delta(\mfm)|=1$, we may assume that 
$\Delta(\mfm)=\{ \alpha_{i} \}$ for some $i$. 
Then the associated dimension vector is $d=(1,\ldots,2,\ldots,1)$, with the $i$-th coordinate is 2, and the remaining coordinates are 1. The number of  edges in the horizontal line diagram $L_{h}(d)$ is $n-1$, which means the corresponding Richardson element $x(d)$ is the sum of $n-1$ elementary matrices. As a result, the rank of $x(d))$ is $n-1$.
Since the eigenvalue of $x(d)$ is 0, the number of Jordan blocks is the geometric multiplicity of 0, which is $n+1-\rank(x(d))=2$.

The rank of $x(d)^{i}$ is $n-i$ for $1\leq i \leq n$, as well as 
$\rank(x(d)^{0})=n+1$ and $\rank(x(d)^{n+1})=0$. We have
$$ \rank(x(d)^{m-1})-2\rank(x(d)^{m})+\rank(x(d)^{m+1})=
         \begin{cases}
                 1,      &\mbox{if}\; m=1 \;\mbox{or}\; n,   \\
                 0,      & \mbox{otherwise}.\\              
         \end{cases}$$
Thus, the partition of $x(d)$ is $\pi=[1, n]$ proving the first statement.

(2) If $|\Delta(\mfm)|=2$, then the rank of $x(d)$ is either $n-1$ or $n-2$. The remaining proof is similar to part (1).
\end{proof}

If $\mid \Delta(\mfm) \mid =q$, then there exists an injective
map from set of 
labeled horizontal line diagrams $L_{h}(d)$ to  set $\mathbb{N}^{n+1-q}$,
defined by
\begin{eqnarray*}
\varphi: \{\mbox{linear diagram}\; L_{h}(d) \}  &\longrightarrow& \mathbb{N}^{n+1-q}\\
  L_{h}(d)  &\mapsto&  (D_{1},D_{2},\ldots, D_{n+1-q})
\end{eqnarray*}
where $D_{1}=1$ and $D_{i}=D_{1}+\sum_{j=1}^{i-1}d_{j}$ for $1<i\leq n+1-q$.
Thereafter, we may write 
$L_{h}(d)=(D_{1},D_{2},\cdots, D_{n+1-q}\mid \widetilde{D_{1}},\widetilde{D_{2}},\cdots,\widetilde{D_{q}})$ alternatively where
$\widetilde{D_{j}}$ for $1\leq j\leq q$ represents the remaining labeled numbers arranged from left to right and proceeds in a top-down manner. 

Recall that if $x(d)$ has partition 
$(\lambda_{1}\geq \lambda_{2}\geq \cdots\geq \lambda_{r}>0)$, then there exist $v_{1},v_{2},\ldots,v_{r}\in \mbV$ such that all $x(d)^{j}v_{i}$ with $1\leq i\leq r$ and $0\leq j < \lambda_{i}$ are a basis for $\mbV$ and such
that $x(d)^{\lambda_{i}}v_{i}=0$ for all $i$.
For each integer $m\geq 1$, denote by $J_{m}$ the $(m\times m)$-matrix where the $(i,i+1)$ entries with $1\leq i < m$ are equal to 1 and all remaining entries are  equal to 0.
In what follows, we shall give the $v_{i}$ for certain $x(d)$.

\begin{Lem}{\label{BC}}
Let $x(d)$ be a Richardson element of $\mfp(d)$ determined by $\Delta(\mfm)$.
\begin{itemize}
\item[(1)] If $\Delta(\mfm)=\{\alpha_{r}\}$ and $L_{h}(d)=(D_{1},D_{2},\cdots, D_{n}\mid \widetilde{D_{1}})$, then $\mbV$ has a basis
\[
\{ x(d)^{n-1}v_{1},\ldots,x(d)v_{1},v_{1},v_{2} \}
\]
where $v_{1}=e_{D_n},v_{2}=e_{\widetilde{D_{1}}}$ and the action of $x(d)$
on $v_{1}$ can be represented as a diagram like
\[\begin{tikzcd}
	{e_{D_{1}}} & {e_{D_{2}}} & \cdots & {e_{D_{n-1}}} & {e_{D_{n}}} \\
	{x(d)^{n-1}v_{1}} & {x(d)^{n-2}v_{1}} & \cdots & {x(d)v_{1}} & {v_{1}}
	\arrow["{x(d)}", shift right=2, curve={height=-12pt},from=1-5,to=1-4]
	\arrow["{x(d)}", shift right=2,curve={height=-12pt}, from=1-4, to=1-3]
	\arrow["{x(d)}", shift right=2,curve={height=-12pt}, from=1-3, to=1-2]
	\arrow["{x(d)}", shift right=2,curve={height=-12pt}, from=1-2, to=1-1]
\end{tikzcd}\]
\item[(2)] If $\Delta(\mfm)=\{ \alpha_{r},\alpha_{r+1}\}$ and 
$L_{h}(d)=(D_{1},D_{2},\cdots, D_{n-1}\mid \widetilde{D_{1}},\widetilde{D_{2}})$, then $\mbV$ has a basis
\[
\{ x(d)^{n-2}v_{1},\ldots,x(d)v_{1},v_{1},v_{2},v_{3} \}
\]
where $v_{1}=e_{D_{n-1}},v_{2}=e_{\widetilde{D_{1}}},v_{3}=e_{\widetilde{D_{2}}}$ and the action of $x(d)$ on $v_{1}$ can be represented as a diagram like
\[\begin{tikzcd}
	{e_{D_{1}}} & {e_{D_{2}}} & \cdots & {e_{D_{n-2}}} & {e_{D_{n-1}}} \\
	{x(d)^{n-2}v_1} & {x(d)^{n-3}v_1} & \cdots & {x(d)v_1} & {v_{1}}
	\arrow["{x(d)}", shift right=2, curve={height=-12pt}, from=1-5, to=1-4]
	\arrow["{x(d)}", shift right=2, curve={height=-12pt}, from=1-4, to=1-3]
	\arrow["{x(d)}", shift right=2, curve={height=-12pt}, from=1-3, to=1-2]
	\arrow["{x(d)}", shift right=2, curve={height=-12pt}, from=1-2, to=1-1]
\end{tikzcd}\]
\item[(3)] If $\Delta(\mfm)=\{ \alpha_{r},\alpha_{s}\}$ with $r<s-1$ and 
$L_{h}(d)=(D_{1},D_{2},\cdots, D_{n-1}\mid \widetilde{D_{1}},\widetilde{D_{2}})$, then $\mbV$ has a basis
\[
\{ x(d)^{n-2}v_{1},\ldots,x(d)v_{1},v_{1},x(d)v_{2},v_{2} \}
\]
where $v_{1}=e_{D_{n-1}},v_{2}=e_{\widetilde{D_{2}}}$ and the action of $x(d)$ on $v_{1},v_2$ can be represented as a diagram like
\[\begin{tikzcd}
	{e_{D_{1}}} & {e_{D_{2}}} & \cdots & {e_{D_{n-2}}} & {e_{D_{n-1}}} \\
	{x(d)^{n-2}v_1} & {x(d)^{n-3}v_1} & \cdots & {x(d)v_1} & {v_{1}}
	\arrow["{x(d)}", shift right=2, curve={height=-12pt}, from=1-5, to=1-4]
	\arrow["{x(d)}", shift right=2, curve={height=-12pt}, from=1-4, to=1-3]
	\arrow["{x(d)}", shift right=2, curve={height=-12pt}, from=1-3, to=1-2]
	\arrow["{x(d)}", shift right=2, curve={height=-12pt}, from=1-2, to=1-1]
\end{tikzcd}\]
\[\begin{tikzcd}
	{e_{\widetilde{D_{1}}}} & {e_{\widetilde{D_{2}}}} \\
	{x(d)v_2} & {v_2}
	\arrow["{x(d)}", shift right=2, curve={height=-12pt}, from=1-2, to=1-1]
\end{tikzcd}\]
\end{itemize}
\end{Lem}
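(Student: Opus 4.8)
The plan is to prove Lemma~\ref{BC} by direct computation using the explicit description of the Richardson element $x(d)$ afforded by the horizontal line diagram $L_h(d)$. Recall that $x(d)=\sum_{i-j}e_{i,j}$, where the sum runs over horizontal edges $i-j$ of $L_h(d)$; concretely, $x(d)$ sends the standard basis vector $e_j$ to $e_i$ whenever $i-j$ is an edge, and to $0$ otherwise. The strategy in each of the three cases is the same: exhibit explicit cyclic vectors $v_1,v_2,(v_3)$, verify that the listed iterates $x(d)^\ell v_i$ are precisely the standard basis vectors $e_k$ (so in particular they are linearly independent and span $\mbV$), and check that $x(d)$ annihilates the top of each Jordan string. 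The count of basis vectors will then automatically match $n+1$, and comparison with the partition computed in Lemma~\ref{JF} will confirm that we have found a genuine Jordan basis.

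First I would set up the bookkeeping for a single string. In the notation $L_h(d)=(D_1,\dots,D_{m}\mid \widetilde D_1,\dots,\widetilde D_q)$ with $D_1=1$ and $D_i=1+\sum_{j<i}d_j$, the numbers $D_1<D_2<\dots<D_m$ are exactly the labels lying on the \emph{top} row of the diagram, and consecutive ones $D_i,D_{i+1}$ are joined by a horizontal edge in $L_h(d)$; hence $x(d)e_{D_{i+1}}=e_{D_i}$ for $1\le i<m$ and $x(d)e_{D_1}=0$ since $D_1=1$ has no predecessor. Setting $v_1=e_{D_m}$ this gives $x(d)^{\ell}v_1=e_{D_{m-\ell}}$ for $0\le \ell\le m-1$ and $x(d)^{m}v_1=0$, so $\{x(d)^{m-1}v_1,\dots,x(d)v_1,v_1\}=\{e_{D_1},\dots,e_{D_m}\}$ is a Jordan string of length $m$. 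In case~(1) we have $q=1$, $m=n$, and the single leftover label $\widetilde D_1$ has no horizontal edge at all (it is the lone dot in the column of size $2$ sitting below $D_r$), so $v_2=e_{\widetilde D_1}$ spans a length-$1$ string; together with the length-$n$ string this is all $n+1$ basis vectors, matching $\pi=[1,n]$.

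For the two-root cases I would argue analogously, the only new point being the behaviour of the leftover labels $\widetilde D_1,\widetilde D_2$. When $\Delta(\mfm)=\{\alpha_r,\alpha_{r+1}\}$ the dimension vector has a single coordinate equal to $3$, the extra column has dots labelled $\widetilde D_1$ (middle row) and $\widetilde D_2$ (bottom row), and neither carries a horizontal edge; so with $v_1=e_{D_{n-1}}$ (length $n-1$), $v_2=e_{\widetilde D_1}$, $v_3=e_{\widetilde D_2}$ (each length $1$) we recover all $n+1$ vectors, matching $\pi=[1^2,n-1]$. When $\Delta(\mfm)=\{\alpha_r,\alpha_s\}$ with $r<s-1$ there are two separate columns of size $2$; the lower dots are $\widetilde D_1$ (below $D_{r+1}$, roughly) and $\widetilde D_2$, and now there \emph{is} a horizontal edge $\widetilde D_1-\widetilde D_2$, so $x(d)e_{\widetilde D_2}=e_{\widetilde D_1}$ and $x(d)e_{\widetilde D_1}=0$; taking $v_2=e_{\widetilde D_2}$ gives a length-$2$ string $\{x(d)v_2,v_2\}=\{e_{\widetilde D_1},e_{\widetilde D_2}\}$, which together with the length-$(n-1)$ string of $v_1$ exhausts $\mbV$ and matches $\pi=[2,n-1]$.

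The one genuinely delicate point — and the place where I would be most careful — is the precise identification of which labels end up being the $\widetilde D_j$ and, in case~(3), the verification that the surviving horizontal edge really does connect $\widetilde D_1$ to $\widetilde D_2$ (and not, say, one of them to a top-row vertex). This is entirely a matter of unwinding the definition of the horizontal line diagram $L_h(d)$ from \cite{Bau}: in $L_h(d)$ a horizontal edge joins label $a$ to label $a'$ exactly when they sit in the same row of adjacent columns, so a lower dot has an incident horizontal edge iff both of its neighbouring columns have height at least as large as its row index. I would phrase this as a short combinatorial observation about the shape of $d$ in each of the two sub-cases of $|\Delta(\mfm)|=2$ (one column of height $3$, versus two separated columns of height $2$), and then the rest is the routine string computation above. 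No deeper input is needed; the lemma is essentially a transcription of the combinatorics of $L_h(d)$ into linear-algebraic language.
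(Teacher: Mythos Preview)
Your proposal is correct and follows essentially the same approach as the paper: both arguments boil down to the observation that $x(d)e_{D_{i+1}}=e_{D_i}$ along the top row of $L_h(d)$, together with the analogous check on the leftover labels $\widetilde D_j$. The paper packages this as conjugation by an explicit permutation matrix $P$ sending $x(d)$ to block--Jordan form (and only writes out case~(1), declaring (2) and (3) analogous), whereas you compute the Jordan strings directly on basis vectors; the underlying computation is identical. One small wording caveat: horizontal edges in $L_h(d)$ connect dots in the same row even across non-adjacent columns (as in the paper's $d=(3,2,3,1)$ example, edge $3$--$8$), not only adjacent ones as you wrote---but this does not affect your argument in the three cases at hand.
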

\begin{proof}
We exclusively demonstrate the veracity of statement (1), noting the analogous nature of proofs for statements (2) and (3).
Assume $x(d)$ is the Richardson element given by $\Delta(\mfm)=\{\alpha_{r}\}$ and $L_{h}(d)=(D_{1},D_{2},\cdots, D_{n}\mid \widetilde{D_{1}})$, then  
$x(d)={\displaystyle \sum_{i=1}^{n-1}e_{D_i,D_{i+1}}}$.
Given such a diagram $L_{h}(d)$, we let $\sigma$ be the permutation of the 
set $\Gamma_{0}$, where 
$$ \sigma({i})=
         \begin{cases}
                 D_{i},    &\mbox{if}\; 1\leq i \leq n ,   \\
   \widetilde{D_{1}},      & \mbox{if}\; i= n+1. \\              
         \end{cases}$$
We now define $P={\displaystyle \prod_{i=1}^{n} E(i,\sigma(i))}$ as the product of elementary matrices. Then we have $P=(e_{\sigma(1)},e_{\sigma(2)},\ldots,e_{\sigma(n+1)})=(e_{D_1},e_{D_2},\ldots, e_{D_{n}},e_{\widetilde{D_{1}}})$ and further 
\[
x(d)(e_{\sigma(1)},e_{\sigma(2)},\ldots,e_{\sigma(n+1)})=
(e_{\sigma(1)},e_{\sigma(2)},\ldots,e_{\sigma(n+1)})
\begin{pmatrix}
   J_{n} & 0 \\
       0 & J_{1}
 \end{pmatrix}, \]
implying $x(d)e_{\sigma(i+1)}=e_{\sigma(i)}$ for $1\leq i \leq n-1$ 
and $x(d)e_{\sigma(n+1)}=0$.
As a result, we have $v_{1}=e_{\sigma(n)}=e_{D_n}, v_{2}=e_{\sigma(n+1)}=e_{\widetilde{D_{1}}}$ and the action of $x(d)$ on $v_{1}$ described in the diagram, as desired.
\end{proof}

\begin{Exam}
Consider the restricted Lie algebra $\mfg=\mfsl_{6}(K)$. We give three examples to illustrate Lemma \ref{BC}:
\begin{itemize}
\item[(1)] Suppose $\Delta(\mfm)=\{ \alpha_{3}\}$. Then the dimension vector
is $d=(1,1,2,1,1)$ and the line diagram $L_{h}(d)$
\[\begin{tikzcd}[cramped,sep=scriptsize]
	1 & 2 & 3 & 5 & 6 \\
	&& 4
	\arrow[no head, from=1-1, to=1-2]
	\arrow[no head, from=1-2, to=1-3]
	\arrow[no head, from=1-3, to=1-4]
	\arrow[no head, from=1-4, to=1-5]
\end{tikzcd}\]
can be written as $(1,2,3,5,6 \mid 4)$.
Let $\sigma=(456)$ be the permutation, then
Lemma \ref{BC} provides that $v_1=e_{\sigma(5)}=e_6, x(d)^{i}v_{1}=e_{\sigma(5-i)}(1\leq i\leq 4) $ and $v_{2}=e_{\sigma(6)}=e_{4}$.
\item[(2)]Let $\Delta(\mfm)=\{ \alpha_{2},\alpha_{3}\}$. Then the dimension vector is $d=(1,3,1,1)$ and the line diagram $L_{h}(d)$ 
\[\begin{tikzcd}[cramped,sep=scriptsize]
	1 & 2 & 5 & 6 \\
	& 3 \\
	& 4
	\arrow[no head, from=1-1, to=1-2]
	\arrow[no head, from=1-2, to=1-3]
	\arrow[no head, from=1-3, to=1-4]
\end{tikzcd}\]
can be written as $(1,2,5,6\mid 3,4)$.
Let $\sigma=(35)(46)$ be the permutation, then
Lemma \ref{BC} provides that 
$v_{1}=e_{\sigma(4)}=e_{6},  x(d)v_1=e_5, x(d)^{2}v_{1}=e_{2},x(d)^{3}v_{1}=e_{1}, v_{2}=e_{\sigma(5)}=e_{3}$ and
$v_{3}=e_{\sigma(6)}=e_{4}$.
\item[(3)] Let $\Delta(\mfm)=\{ \alpha_{1},\alpha_{3}\}$. Then the dimension vector is $d=(2,2,1,1)$ and the line diagram $L_{h}(d)$ 
\[\begin{tikzcd}[cramped,sep=scriptsize]
	1 & 3 & 5 & 6 \\
	2 & 4
	\arrow[no head, from=1-1, to=1-2]
	\arrow[no head, from=1-2, to=1-3]
	\arrow[no head, from=1-3, to=1-4]
	\arrow[no head, from=2-1, to=2-2]
\end{tikzcd}\]
can be written as $(1,3,5,6 \mid 2,4)$.
Let $\sigma=(23564)$ be the permutation, by Lemma \ref{BC}, we have 
$v_{1}=e_{\sigma(5)}=e_{6}, x(d)v_{1}=e_{5}, x(d)^{2}v_{1}=e_{3}, x(d)^{3}v_{1}=e_{1}, 
v_{2}=e_{\sigma(6)}=e_{4}$ and $x(d)v_{2}=e_{2}$.
\end{itemize}
\end{Exam}

Let $c_{\gl(\mbV)}(x(d))$ be the centralizer of $x(d)$ in $\gl(\mbV)$,
and $c_{\mfu}(x(d)):=c_{\gl(\mbV)}(x(d))\cap \mfu$ be the centralizer of $x(d)$ in $\mfu$.
Each $Z\in c_{\gl(\mbV)}(x(d))$ is determined by the $Z(v_{i})$ for $1\leq i \leq r$ because 
$Z(x(d)^{k}v_{i})=x(d)^{k}Z(v_{i})$ for all $i$ and $k$. Further we have to have $x(d)^{\lambda_{i}}Z(v_{i})=0$ for all $i$. Using this one checks
that $Z(v_{i})$ has the form
\[
 Z(v_{i})=\sum_{j=1}^{r}\sum_{k=\max(0,\lambda_{j}-\lambda_{i})}^{\lambda_{j}-1} a_{k,j;i}x(d)^{k}v_{j}.
\]
When $|\Delta(\mfm)|=0$, then $x(d)$ is the regular nilpotent element of 
$\mfsl_{n+1}(K)$. We refer the interested reader to \cite{Pan} for 
relevant results.
In the following, we will determine the centralizer 
$c_{\mfu}(x(d))$ of $x(d)$ and its center
$Z(c_{\mfu}(x(d)))$ when $1\leq |\Delta(\mfm)|\leq 2$.

\begin{Thm}{\label{Cent1}}
If $|\Delta(\mfm)|=1$,  the centralizer $c_{\mfu}(x(d))$ is characterized as
follows:
\begin{itemize}
\item[(1)] If $\Delta(\mfm)=\{ \alpha_{1}\}$, then elements 
\[
 e_{2,n+1},\;\; x(d)^{k}(1\leq k \leq n-1)
\]
form a basis of $c_{\mfu}(x(d))$.
\item[(2)] If $\Delta(\mfm)=\{ \alpha_{s} \}$ with $1<s<n$, then elements 
\[
 e_{1,s+1},\;\;e_{s+1,n+1},\;\; x(d)^{k}(1\leq k \leq n-1)
\]
form a basis of $c_{\mfu}(x(d))$.
\item[(3)] If $\Delta(\mfm)=\{\alpha_{n}\}$, then elements 
\[
e_{1,n+1}, \;\; x(d)^{k}(1\leq k \leq n-1)
\]
form a basis of $c_{\mfu}(x(d))$.
\end{itemize}
\end{Thm}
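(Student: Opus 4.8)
The plan is to compute $c_{\mfu}(x(d))$ directly by combining the general description of $c_{\gl(\mbV)}(x(d))$ recorded just before the statement with the combinatorial constraints furnished by Lemma \ref{sp}. By Lemma \ref{JF}(1), when $|\Delta(\mfm)|=1$ the element $x(d)$ has partition $\pi=[1,n]$, so $r=2$, $\lambda_1=n$, $\lambda_2=1$, and by Lemma \ref{BC}(1) we have an explicit basis $\{x(d)^{n-1}v_1,\dots,x(d)v_1,v_1,v_2\}$ of $\mbV$ with $v_1=e_{D_n}$, $v_2=e_{\widetilde{D_1}}$. Plugging $\lambda_1=n,\lambda_2=1$ into the formula
\[
Z(v_i)=\sum_{j=1}^{2}\sum_{k=\max(0,\lambda_j-\lambda_i)}^{\lambda_j-1}a_{k,j;i}\,x(d)^k v_j
\]
shows that an arbitrary $Z\in c_{\gl(\mbV)}(x(d))$ is governed by the parameters $a_{k,1;1}$ ($0\le k\le n-1$), $a_{0,2;1}$ (only if $\lambda_2-\lambda_1\le 0$, i.e.\ always), $a_{k,1;2}$ ($k\ge n-1$, so only $k=n-1$), and $a_{0,2;2}$. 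This already pins down $\dim c_{\gl(\mbV)}(x(d))$; the real work is to intersect with $\mfu$ and discard the part landing in $\mfm$ or violating the strict-upper-triangular shape.

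Next I would turn the matrix description into the root-vector description. The summand $\sum_k a_{k,1;1}x(d)^k$ is exactly the span of the powers $x(d),\dots,x(d)^{n-1}$ together with the identity; the identity and any diagonal contribution are excluded since we are inside the nilradical $\mfu$, which leaves the $n-1$ powers $x(d)^k$, $1\le k\le n-1$. The remaining freedom corresponds to two rank-one maps: one sending $v_2\mapsto x(d)^{n-1}v_1$ (the parameter $a_{n-1,1;2}$) and one sending $v_1\mapsto v_2$ (the parameter $a_{0,2;1}$); translating through $v_1=e_{D_n}$, $v_2=e_{\widetilde{D_1}}$ and $x(d)^{n-1}v_1=e_{D_1}$, these become the elementary matrices $e_{\widetilde{D_1},D_1}$ and $e_{D_n,\widetilde{D_1}}$, up to swapping source and target according to which of $\widetilde{D_1}$, $D_1$, $D_n$ is larger. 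Unwinding the three cases $\Delta(\mfm)=\{\alpha_1\}$, $\{\alpha_s\}$, $\{\alpha_n\}$, in which $\widetilde{D_1}$ equals $2$, $s+1$, $n+1$ respectively while $D_1=1$ and $D_n=n+1$, one reads off exactly the stated basis elements: in case (1) only $e_{2,n+1}$ survives (the other rank-one map would be $e_{2,1}$, not in $\mfu$); in case (3) only $e_{1,n+1}$ survives; in the generic case (2) both $e_{1,s+1}$ and $e_{s+1,n+1}$ survive. Finally I would double-check linear independence and that these elements genuinely lie in $\mfu=\bigoplus_{\alpha\in\Phi^+\setminus\Phi(\mfm)^+}\mfg_\alpha$ by noting the corresponding roots are positive and not in $\Phi(\mfm)$.

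The main obstacle is bookkeeping rather than conceptual: one must carefully track how the permutation $\sigma$ from Lemma \ref{BC} relabels the chain, so that the abstract basis vectors $v_i$ and the maps between them translate correctly into elementary matrices $e_{a,b}$ with $a<b$, and in particular verify that the rank-one maps not appearing in the list are precisely those whose matrix form would be lower-triangular (hence not in $\mfu$) or diagonal. As a consistency check I would also verify the dimension count $\dim c_{\mfu}(x(d))=n$ in cases (1),(3) and $n+1$ in case (2) against $\dim c_{\gl(\mbV)}(x(d))=\sum_i(2i-1)\lambda_i'$ for the partition $[1,n]$, i.e.\ $n^2+1$, restricted to the nilradical; alternatively one can cross-check a small case such as $\mfsl_4$ against Lemma \ref{sp} applied directly to $x(d)$.
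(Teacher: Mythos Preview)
Your approach is essentially the paper's own: use Lemma~\ref{JF} for the partition $[1,n]$, write a general $Z\in c_{\gl(\mbV)}(x(d))$ via the basis of Lemma~\ref{BC}, kill the diagonal parameters $a_{0,1;1},a_{0,2;2}$ because $Z\in\mfu$, and then identify the two remaining rank-one pieces as elementary matrices case by case. One bookkeeping slip to correct: since $e_{i,j}e_k=\delta_{jk}e_i$, the map $v_2\mapsto x(d)^{n-1}v_1$ is $e_{D_1,\widetilde{D_1}}$ and $v_1\mapsto v_2$ is $e_{\widetilde{D_1},D_n}$ (you have both reversed), so in case~(1) the excluded piece is $e_{1,2}\in\mfm$, not the lower-triangular $e_{2,1}$; the conclusion is unchanged but the reason for exclusion is membership in the Levi, not lower-triangularity.
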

\begin{proof}
When $|\Delta(\mfm)|=1$, the partition of $x(d)$ is $[1,n]$ according to Lemma \ref{JF}. Then there exist two elements $v_{1}$ and $v_{2}$ such that
$x(d)^{i}v_{1}$ for $0\leq i\leq n-1$ together with $v_{2}$ form a basis 
of $\mbV$. Let $Z\in c_{\mfu}(x(d)) $. The first observation of 
$c_{\mfu}(x(d))=c_{\gl(\mbV)}(x(d)) \cap \mfu$, implying
$Z\in c_{\gl(\mbV)}(x(d))$ and further
\[
 Z(v_{1})=\sum_{k=0}^{n-1}a_{k,1;1}x(d)^{k}v_{1} + a_{0,2;1}v_{2}
\]
\[
Z(v_{2})= a_{n-1,1;2} x(d)^{n-1}v_{1}+ a_{0,2;2}v_{2}.
\]
Additionally, the fact that $Z\in \mfu$ gives $a_{0,1;1}=0$ and $a_{0,2;2}=0$. By assumption, we let  $\Delta(\mfm)=\{ \alpha_{s} \}$ for $1\leq s \leq n$. According to  Lemma \ref{BC}, we have
$$ v_{1}=
         \begin{cases}
                 e_{n+1},    &\mbox{if}\; 1\leq s <n ,   \\
                 e_{n},      & \mbox{if}\; s=n\\              
         \end{cases}$$
and $ v_{2}=e_{s+1}$.
\begin{case}
$\Delta(\mfm)=\{\alpha_{1}\}$. Since $x(d)^{n-1}v_{1}=e_{1}$, which gives $a_{n-1,1;2}=0$. Let $Z_{1}\in c_{\mfu}(x(d))$ with $Z_1(v_1)=v_{2}$.
Then $Z_1=e_{2,n+1}+Z_{1}^{'}$. By Lemma \ref{sp}, $Z_{1}^{'}\in c_{\mfu}(x(d))$, implying that $Z_{1}^{'}(v_{1})=Z_{1}^{'}(v_2)=0$, so 
$Z_{1}^{'}=0$. Therefore,
\[
Z=\sum_{k=1}^{n-1}a_{k,1;1}x(d)^{k}+ a_{0,2;1}e_{2,n+1}.
\]
\end{case}
\begin{case}
$\Delta(\mfm)=\{\alpha_{s}\}$ for $1<s<n+1$. Let $Z_{2} \in c_{\mfu}(x(d))$
with $Z_{2}(v_1)=v_{2}$, then $Z_2=e_{s+1,n+1}+Z_{2}^{'}$.   
By Lemma \ref{sp}, $Z_{2}^{'}\in c_{\mfu}(x(d))$, giving $Z_{2}^{'}(v_{1})=0$. If $Z_{2}^{'}(v_{2})=e_{1}$, then $Z_{2}^{'}=e_{1,s+1}$. 
Hene,
\[
Z=\sum_{k=1}^{n-1}a_{k,1;1}x(d)^{k}+ a_{0,2;1}e_{s+1,n+1}+a_{n-1,1;2}e_{1,s+1}.
\]
\end{case}

\begin{case}
$\Delta(\mfm)=\{\alpha_{n}\}$. Given that $v_{1}=e_{n}$ and $v_{2}=e_{n+1}$,  it can be deduced that $a_{0,2;1}=0$.  Let $Z_{3}\in c_{\mfu}(x(d))$ with $Z_3(v_2)=x(d)^{n-1}v_{1}=e_{1}$. Then $Z_{3}=e_{1,n+1}+Z_{3}^{'}$ and 
$Z_{3}^{'}\in c_{\mfu}(x(d))$ by Lemma \ref{sp}.Hence,
\[
Z=\sum_{k=1}^{n-1}a_{k,1;1}x(d)^{k}+a_{n-1,1;2}e_{1,n+1}.
\]
\end{case}

\end{proof}

\begin{Thm}{\label{Cent2}}
If $|\Delta(\mfm)|=2$ and $x(d)$ is of partition $[1^{2},n-1]$, then
the centralizer $c_{\mfu}(x(d))$ is characterized as follows:
\begin{itemize}
\item[(1)] If $\Delta(\mfm)=\{\alpha_{1},\alpha_{2}\}$, 
then elements
\[
e_{2,n+1},\;\; e_{3,n+1},\;\;x(d)^{k}(1\leq k \leq n-2)
\]
form a basis for  $c_{\mfu}(x(d))$.
\item[(2)] If $\Delta(\mfm)=\{\alpha_{r},\alpha_{r+1}\}$ with $1<r<n-1$, 
then elements
\[
e_{1,r+1},\;\; e_{1,r+2},\;\; e_{r+1,n+1},\;\; e_{r+2,n+1},\;\; 
x(d)^{k}(1\leq k \leq n-2)
\]
form a basis for  $c_{\mfu}(x(d))$.
\item[(3)] If $\Delta(\mfm)=\{\alpha_{n-1},\alpha_{n}\}$, 
then elements 
\[
e_{1,n},\;\; e_{1,n+1},\;\; x(d)^{k}(1\leq k \leq n-2)
\]
form a basis for $x_{\mfu}(x(d))$.
\end{itemize}
\end{Thm}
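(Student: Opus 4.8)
The plan is to argue exactly as in the proof of Theorem \ref{Cent1}, using the partition information from Lemma \ref{JF} and the basis vectors supplied by Lemma \ref{BC}, but now with the partition $[1^2, n-1]$. Since $x(d)$ has partition $(\lambda_1, \lambda_2, \lambda_3) = (n-1, 1, 1)$, Lemma \ref{BC}(2) provides vectors $v_1, v_2, v_3$ so that $x(d)^i v_1$ for $0 \le i \le n-2$ together with $v_2$ and $v_3$ form a basis of $\mbV$, with $x(d) v_2 = x(d) v_3 = 0$. The general shape of a centralizing endomorphism $Z \in c_{\gl(\mbV)}(x(d))$ is then
\[
Z(v_1) = \sum_{k=0}^{n-2} a_{k,1;1} x(d)^k v_1 + a_{0,2;1} v_2 + a_{0,3;1} v_3, \quad
Z(v_j) = a_{n-2,1;j} x(d)^{n-2} v_1 + a_{0,2;j} v_2 + a_{0,3;j} v_3
\]
for $j = 2, 3$. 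The condition $Z \in \mfu$ forces the "diagonal" coefficients to vanish, namely $a_{0,1;1} = 0$, $a_{0,2;2} = 0$, $a_{0,3;3} = 0$, and it will also kill whichever of the off-diagonal scalars among the $v_2, v_3$ block correspond to non-positive roots; the remaining freedom is precisely what must be matched against the claimed bases.

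The main work is a case analysis on the position of $\Delta(\mfm) = \{\alpha_r, \alpha_{r+1}\}$, exactly mirroring the three cases of Theorem \ref{Cent1}. In each case I would first read off $v_1, v_2, v_3$ explicitly from Lemma \ref{BC}(2): here $v_1 = e_{D_{n-1}}$ is the bottom of the long Jordan string, and $v_2 = e_{\widetilde{D_1}}$, $v_3 = e_{\widetilde{D_2}}$ are the two standard basis vectors sitting in rows $2$ and $3$ of the relevant column. When $\Delta(\mfm) = \{\alpha_1, \alpha_2\}$, the column of height $3$ is the first one, so $\widetilde{D_1} = 2$, $\widetilde{D_2} = 3$, and $x(d)^{n-2} v_1 = e_1$; the membership in $\mfu$ then rules out any map hitting $e_1$ from $v_2$ or $v_3$ (that would be an off-diagonal term going "upward" in the wrong direction), leaving $Z(v_1) \in \langle v_2, v_3 \rangle$ modulo powers of $x(d)$, which yields $e_{2,n+1}$ and $e_{3,n+1}$ plus the $x(d)^k$. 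For the interior case $1 < r < n-1$, both $Z(v_1) \mapsto v_2, v_3$ and $Z(v_2), Z(v_3) \mapsto e_1 = x(d)^{n-2} v_1$ are allowed by the $\mfu$-condition, producing the four generators $e_{r+1,n+1}, e_{r+2,n+1}, e_{1,r+1}, e_{1,r+2}$. For $\Delta(\mfm) = \{\alpha_{n-1}, \alpha_n\}$, the height-$3$ column is the last one so $v_2 = e_n$, $v_3 = e_{n+1}$ lie "to the right" of $v_1$, forcing the $v_1 \mapsto v_2, v_3$ directions to die and leaving only $Z(v_2), Z(v_3) \mapsto e_1$, i.e. $e_{1,n}$ and $e_{1,n+1}$.

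In each case, once the candidate elements are identified I would verify, as in the proof of Theorem \ref{Cent1}, that after subtracting the explicit elementary-matrix contribution the remainder $Z'$ still lies in $c_{\mfu}(x(d))$ by Lemma \ref{sp}, satisfies $Z'(v_1) = Z'(v_2) = Z'(v_3) = 0$, and is therefore zero; this shows the listed elements span. Linear independence is immediate since the $x(d)^k$ and the elementary matrices $e_{i,n+1}$ (or $e_{1,j}$) occupy disjoint root spaces. The one genuinely delicate point — and the step I expect to be the main obstacle — is making the bookkeeping in Lemma \ref{sp} airtight in the interior case $1 < r < n-1$: one must check that the four off-diagonal directions really are realized by genuine elements of $\mfu$ (rather than merely of $\gl(\mbV)$) and that no extra relations are imposed by the requirement $x(d)^{n-2} Z(v_j) = 0$, i.e. that the long Jordan string is long enough for all four maps to coexist; this is where the hypothesis $n \ge 3$ implicit in "$1 < r < n-1$" and the partition being $[1^2, n-1]$ rather than $[2, n-1]$ is used.
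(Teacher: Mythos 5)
Your proposal is correct and follows essentially the same route as the paper: write a general centralizing endomorphism $Z$ in terms of the Jordan basis $x(d)^{i}v_{1},v_{2},v_{3}$ supplied by Lemma \ref{BC}, kill the coefficients forbidden by $Z\in\mfu$, and identify the surviving generators case by case via the subtraction argument of Lemma \ref{sp}. In fact your write-up supplies more detail than the paper's own proof, which defers the case analysis to ``the remainder is similar to Theorem \ref{Cent1}.''
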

\begin{proof}
If $|\Delta(\mfm)|=2$ and the partition of $x(d)$ is $[1^{2},n-1]$,
then there exist three elements $v_{1}, v_{2}$ and $v_{3}$ such that
$x(d)^{i}v_{1}$ for $0\leq i\leq n-2$ together with $v_{2},v_{3}$ are a basis of $\mbV$. 
Let $Z\in c_{\mfu}(x(d))$. Then $Z\in c_{\gl(\mbV)}(x(d))$
and 
\begin{align*}
  Z(v_{1})=& \sum_{k=0}^{n-2}a_{k,1;1}x(d)^{k}v_{1}+a_{0,2;1}v_{2}+a_{0,3;1}v_{3} \\
Z(v_{2})=& a_{n-2,1;2}x(d)^{n-2}v_{1}+a_{0,2;2}v_{2}+a_{0,3;2}v_{3} \\
Z(v_{3})=& a_{n-2,1;3}x(d)^{n-2}v_{1}+ a_{0,2;3}v_{2}+a_{0,3;3}v_{3}.
\end{align*}
Since $Z\in \mfu$, we have $a_{0,1;1}=a_{0,2;2}=a_{0,3;3}=0$. 
By assumption, we may let $\Delta(\mfm)=\{\alpha_{r},\alpha_{r+1}\}$ for
$1\leq r <n$. By Lemma \ref{BC}, we have
$$ v_{1}=
         \begin{cases}
                 e_{n-1},      &\mbox{if} \; r=n-1 ,   \\
                 e_{n+1},      & \mbox{if}\; r<n-1,     
         \end{cases}$$
$v_{2}=e_{r+1}$ and $v_{3}=e_{r+2}$. The remainder of the proof exhibits similarity to the demonstration of Theorem \ref{Cent1}.
\end{proof}

\setcounter{case}{0}
\begin{Thm}{\label{Cent3}}
If $|\Delta(\mfm)|=2$ and $x(d)$ is of partition $[2,n-1]$,
then the centralizer $c_{\mfu}(x(d))$ is characterized as follows:
\begin{itemize}
\item[(1)] If $\Delta(\mfm)=\{\alpha_{1},\alpha_{n}\}$, 
then elements 
\[
e_{1,n+1},\;\;e_{2,n},\;\; x(d)^{k}(1\leq k \leq n-2)
\]
form a basis for $c_{\mfu}(x(d))$.
\item[(2)] If $\Delta(\mfm)=\{\alpha_{1},\alpha_{s}\}$ with $2<s<n$, 
then elements
\[
e_{1,s+1},\;\;e_{2,n+1},\;\; e_{2,n}+e_{s+1,n+1},\;\; x(d)^{k}(1\leq k \leq n-2)
\]
form a basis for  $c_{\mfu}(x(d))$.
\item[(3)] If $\Delta(\mfm)=\{\alpha_{r},\alpha_{n}\}$ with $1<r<n-1$, 
then elements
\[
e_{1,n+1},\;\;e_{r+1,n},\;\; e_{1,r+1}+e_{2,n+1},\;\; x(d)^{k}(1\leq k \leq n-2)
\]
form a basis for  $c_{\mfu}(x(d))$.
\item[(4)] If $\Delta(\mfm)=\{\alpha_{r},\alpha_{s}\}$ with $1<r<s-1<n-1$, 
then elements
\[
e_{1,s+1},\;\; e_{r+1,n+1},\;\; e_{1,r+1}+e_{2,s+1},\;\;e_{r+1,n}+e_{s+1,n+1},\;\; x(d)^{k}(1\leq k \leq n-2)
\]
form a basis for  $c_{\mfu}(x(d))$.
\end{itemize}
\end{Thm}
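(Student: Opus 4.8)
The plan is to follow the blueprint already established in the proofs of Theorems \ref{Cent1} and \ref{Cent2}, since the statement to be proved (Theorem \ref{Cent3}) is the remaining case $|\Delta(\mfm)|=2$ with partition $[2,n-1]$. First I would invoke Lemma \ref{JF}(2) to record that the partition is indeed $[2,n-1]$, so there exist vectors $v_{1},v_{2}$ with $\{x(d)^{i}v_{1}: 0\le i\le n-2\}\cup\{x(d)v_{2},v_{2}\}$ a basis of $\mbV$; here $v_{1}=e_{\widetilde{D_{1}}\text{-type label}}$ and $v_{2},x(d)v_{2}$ come from the two-dot second Jordan string, exactly as described in Lemma \ref{BC}(3). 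Then for $Z\in c_{\mfu}(x(d))$ I would write down the general form of $Z(v_{1})$ and $Z(v_{2})$ dictated by the displayed formula $Z(v_{i})=\sum_{j,k}a_{k,j;i}x(d)^{k}v_{j}$ with the constraint $r=2$, $\lambda_{1}=n-1$, $\lambda_{2}=2$: this gives $Z(v_{1})=\sum_{k=1}^{n-2}a_{k,1;1}x(d)^{k}v_{1}+a_{1,2;1}x(d)v_{2}+a_{0,2;1}v_{2}$ and $Z(v_{2})=a_{n-2,1;2}x(d)^{n-2}v_{1}+a_{n-3,1;2}'\,\cdots$ truncated by $\lambda_{2}-\lambda_{1}$, i.e. only the top power of $v_1$-string survives plus $a_{1,2;2}x(d)v_{2}+a_{0,2;2}v_{2}$. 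The condition $Z\in\mfu$ (strictly upper triangular, and lying in the prescribed root spaces) then kills the diagonal-type coefficients $a_{0,1;1}$ and $a_{0,2;2}$ and, crucially, forces several of the remaining coefficients to vanish or to be linked, depending on which of the four sub-cases we are in.

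The core of the argument is the same device used in Theorems \ref{Cent1}--\ref{Cent2}: pick a specific $Z_{i}\in c_{\mfu}(x(d))$ realizing each "new" basis element (e.g. in case (2), a $Z$ with $Z(v_{1})=x(d)v_{2}$, which must then equal $e_{2,n}$ plus something, and we identify the correction using Lemma \ref{sp}), subtract it off, and argue by Lemma \ref{sp} that the remainder still centralizes $x(d)$ and, having trivial action on all $v_{i}$, must be zero. The only genuinely new feature in partition $[2,n-1]$ is that the second Jordan string has length $2$ rather than $1$, so the "off-diagonal" maps between the two strings can be non-trivial in \emph{both} directions and at \emph{two} levels; this is precisely the source of the combined generators $e_{2,n}+e_{s+1,n+1}$, $e_{1,r+1}+e_{2,n+1}$ and $e_{r+1,n}+e_{s+1,n+1}$ in cases (2)--(4). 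I would handle the four sub-cases in turn by computing, for each, the explicit labels $D_{i},\widetilde{D_{j}}$ from the map $\varphi$ preceding Lemma \ref{BC} (these are completely determined by the positions $r<s$ of the two simple roots), substituting into the formula for $Z(v_{i})$, and reading off which elementary matrices $e_{a,b}$ appear; Lemma \ref{sp}(1)--(2) then dictates that whenever $e_{a,b}$ occurs with $a^{-}\neq\emptyset$ or $b^{+}\neq\emptyset$ the partner $e_{a^{-},b^{-}}$ resp. $e_{a^{+},b^{+}}$ occurs with the same coefficient, which is exactly what produces the two-term combinations; Lemma \ref{sp}(3) handles the "endpoint" generators $e_{1,s+1}$, $e_{r+1,n+1}$, $e_{1,n+1}$, $e_{2,n}$ which sit in a single root space.

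I expect the main obstacle to be purely bookkeeping: verifying in sub-case (4), where $1<r<s-1<n-1$, that the basis is \emph{exactly} the five listed families and no further generator is missed — in particular that $e_{1,r+1}+e_{2,s+1}$ and $e_{r+1,n}+e_{s+1,n+1}$ are forced to appear in those rigid combinations and cannot be split, and that there is no sixth independent commuting generator hiding in the interaction of the long string with the short string. This amounts to checking that the linear system imposed by $[x(d),Z]=0$ on the coefficients $a_{k,j;i}$ has solution space of the claimed dimension $n-2+|\{\text{new generators}\}|$; by Lemma \ref{JF} the nilpotent $x(d)$ has the regular-in-a-block-of-size-$(n-1)$ plus trivial structure, so the abstract dimension of $c_{\gl(\mbV)}(x(d))$ is known (it is $(n-1)+1+1+2\cdot 1 = n+3$ for partition $[2,n-1]$, counting $\sum_j(2\min(\lambda_i,\lambda_j)-\delta)$), and one only has to see how much of it is cut out by the condition of lying in $\mfu$ rather than merely in the Borel; comparing this count against the list in each of (1)--(4) gives the required consistency check. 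Everything else is the routine substitution already modeled in the earlier proofs, so I would state case (1) in full detail, then write "the remaining cases (2)--(4) are proved analogously, using Lemma \ref{sp} to pin down the two-term generators" — mirroring the style the author already adopted for Theorem \ref{Cent2}.
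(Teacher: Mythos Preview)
Your approach is essentially the same as the paper's: set up the Jordan basis via Lemma \ref{BC}(3), write $Z(v_{1}),Z(v_{2})$ using the displayed formula for $c_{\gl(\mbV)}(x(d))$, then run through the four sub-cases with Lemma \ref{sp} to pin down the two-term generators. A few slips to correct before executing: for $Z(v_{2})$ the $v_{1}$-string contributes \emph{two} terms $a_{n-2,1;2}x(d)^{n-2}v_{1}+a_{n-3,1;2}x(d)^{n-3}v_{1}$ (since $k$ runs from $\lambda_{1}-\lambda_{2}=n-3$ to $n-2$), not just the top one; the paper also records the tie $a_{1,1;1}=a_{1,2;2}$, which is what lets the single family $x(d)^{k}$ absorb the diagonal action on both strings; and your consistency count $\dim c_{\gl(\mbV)}(x(d))=n+3$ should read $n+5$ (the formula is $\sum_{i,j}\min(\lambda_{i},\lambda_{j})=(n-1)+2+2+2$). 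None of these changes the strategy.
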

\begin{proof}
When $x(d)$ has partition $[2,n-1]$, 
there exist two elements $v_{1}$ and $v_{2}$ such that
$x(d)^{i}v_{1}$ for $0\leq i\leq n-2$ together with $v_{2}, x(d)v_{2}$ 
form a basis of $\mbV$. If $Z\in c_{\mfu}(x(d))$, then 
\begin{align*}
  Z(v_{1})=& \sum_{k=1}^{n-2}a_{k,1;1}x(d)^{k}v_{1}+a_{0,2;1}v_{2}+a_{1,2;1}x(d)v_{2} \\
Z(v_{2})=& a_{n-3,1;2}x(d)^{n-3}v_{1}+a_{n-2,1;2}x(d)^{n-2}v_{1}
+a_{1,2;2}x(d)v_{2} 
\end{align*}
with $a_{1,1;1}=a_{1,2;2}$.
Since $|\Delta(\mfm)|=2$, we may assume that 
$\Delta(\mfm)=\{\alpha_{r},\alpha_{s}\}$ where $1\leq r<s\leq n$ and $r+1<s$. 
By Lemma \ref{BC}, we have 
\begin{align*}
 v_{1}=
         \begin{cases}
               e_{n},        &\mbox{if}\; s=n ,   \\ 
               e_{n+1},      & \mbox{if}\; s<n,     
         \end{cases}
\end{align*}
and $v_{2}=e_{s+1}$.
\begin{case}
    $\Delta(\mfm)=\{\alpha_{1}, \alpha_{n} \}$. Then $a_{0,2;1}=a_{n-3,1;2}=0$ . Let $Z^{'}\in  c_{\mfu}(x(d)) $ with $Z^{'}(v_1)=x(d)v_2$ and $Z^{'}(v_{2})=0$, we have $Z^{'}=e_{2,n}$. Let $Z^{''}\in  c_{\mfu}(x(d))$ with $Z^{''}(v_1)=0$ and $Z^{''}(v_2)=x(d)^{n-2}v_{1}$, we have $Z^{''}=e_{1,n+1}$. In this case, we have
\[
 Z=\sum_{k=1}^{n-2}a_{k,1;1}x(d)^{k}+a_{1,2;1}e_{2,n}+a_{n-2,1;2}e_{1,n+1}.
\]
\end{case}

\begin{case}
  $\Delta(\mfm)=\{ \alpha_1, \alpha_s \}$  with $2<s<n$.  Let $x\in c_{\mfu}(x(d))$ with $x=e_{a,b}+x^{'}$ where $l(a)\neq l(b)$. If $a^{-}=b^{+}=\emptyset$, then $e_{a,b}=e_{1,s+1}$ or $e_{a,b}=e_{2,n+1}$, and $x^{'}\in c_{u}(x(d))$ by Lemma \ref{sp}.  Further, we have $e_{1,s+1}(v_{1})=0$ and $e_{1,s+1}(v_{2})=x(d)^{n-2}v_{1}$. By the same token, $e_{2,n+1}(v_{1})=x(d)v_{2}$ and $e_{2,n+1}(v_{2})=0$. If $a^{-}\neq \emptyset $ or $b^{+}\neq \emptyset$, 
then $e_{a,b}=e_{2,n}$ and $x-e_{2,n}-e_{s+1,n+1}\in c_{u}(x(d))$ by Lemma \ref{sp}. Moreover, we have 
\[
(e_{2,n}+e_{s+1,n+1})(v_{1})=v_{2}, \;\;\; (e_{2,n}+e_{s+1,n+1})(v_2)=0.
\]
\end{case}
  Since $e_{a,b}\neq e_{3,s+1}$, which implies that
$a_{n-3,1;2}=0$. In this case, we have
\[
 Z=\sum_{k=1}^{n-2}a_{k,1;1}x(d)^{k}+a_{n-2,1;2}e_{1,s+1}+a_{1,2;1}e_{2,n+1}
+a_{0,2;1}(e_{2,n}+e_{s+1,n+1}).
\]

\begin{case}
 $\Delta(\mfm)=\{ \alpha_r, \alpha_n \}$ with $1<r<n-1$.  Then $a_{0,2;1}=0$.  Let $x\in c_{\mfu}(x(d))$ with $x=e_{a,b}+x^{'}$. 
If $a^{-}=b^{+}=\emptyset$, then $e_{a,b}=e_{1,n+1}$ or $e_{a,b}=e_{r+1,n}$, and $x^{'}\in c_{u}(x(d))$ by Lemma \ref{sp}. 
Further, we have $e_{1,n+1}(v_{1})=0$ and $e_{1,n+1}(v_{2})=x(d)^{n-2}v_{1}$. By the same token, $e_{r+1,n}(v_{1})=x(d)v_{2}$ and $e_{r+1,n}(v_{2})=0$. 
If $a^{-}\neq \emptyset $ or $b^{+}\neq \emptyset$, 
then $e_{a,b}=e_{2,n+1}$ and $x-e_{2,n+1}-e_{1,r+1}\in c_{\mfu}(x(d))$ by Lemma \ref{sp}. Moreover, we have 
\[
(e_{2,n+1}+e_{1,r+1})(v_{1})=0 , \;\;\; (e_{2,n+1}+e_{1,r+1})(v_2)=x(d)^{n-3}v_{1}.
\]
In this case, we have
\[
 Z=\sum_{k=1}^{n-2}a_{k,1;1}x(d)^{k}+ a_{n-2,1;2}e_{1,n+1}+ a_{1,2;1}e_{r+1,n}+a_{n-3,1;2}(e_{2,n+1}+e_{1,r+1}).
\]
\end{case}

\begin{case}
 $\Delta(\mfm)=\{ \alpha_r, \alpha_s \}$ with $1<r<s-1<n-1$.
 Let $x\in c_{\mfu}(x(d))$ with $x=e_{a,b}+x^{'}$. 
If $a^{-}=b^{+}=\emptyset$, then $e_{a,b}=e_{1,s+1}$ or $e_{a,b}=e_{r+1,n+1}$, and $x^{'}\in c_{u}(x(d))$ by Lemma \ref{sp}. 
Further, we have $e_{1,s+1}(v_{1})=0$ and $e_{1,s+1}(v_{2})=x(d)^{n-2}v_{1}$. By the same token, $e_{r+1,n+1}(v_{1})=x(d)v_{2}$ and $e_{r+1,n+1}(v_{2})=0$. 
If $a^{-}\neq \emptyset $ or $b^{+}\neq \emptyset$, 
then $e_{a,b}=e_{1,r+1}$ and $x-e_{1,r+1}-e_{2,s+1}\in c_{\mfu}(x(d))$ or $e_{a,b}=e_{r+1,n}$ and $x-e_{r+1,n}-e_{s+1,n+1}\in c_{\mfu}(x(d))$ by Lemma \ref{sp}.    
Moreover, we have 
\[
(e_{1,r+1}+e_{2,s+1})(v_{1})=0 , \;\;\; (e_{1,r+1}+e_{2,s+1})(v_2)=x(d)^{n-3}v_{1}
\]
and
\[
(e_{r+1,n}+e_{s+1,n+1})(v_{1})=v_{2},   \;\;\; (e_{r+1,n}+e_{s+1,n+1})(v_{2})=0 
\]
In this case, we have
\begin{align*}
  Z=&\sum_{k=1}^{n-2}a_{k,1;1}x(d)^{k}+ a_{n-2,1;2}e_{1,s+1}+ a_{1,2;1} e_{r+1,n+1}\\
   &+ a_{n-3,1;2}(e_{1,r+1}+e_{2,s+1})+ a_{0,2;1}(e_{r+1,n}+e_{s+1,n+1}).
\end{align*}
\end{case}
\end{proof}

\begin{Cor}{\label{sum}}
Let $x(d)$ be a Richardson element determined by $|\Delta(\mfm)|\leq 2$ and $Z(c_{\mfu}(x(d)))$ be the center of its centralizer  $c_{\mfu}(x(d))$.  Then 
\begin{itemize}
\item[(1)] $Z(c_{\mfu}(x(d)))=c_{\mfu}(x(d))$, provided that $\Delta(\mfm)$ is one of the following cases
\[
\{ \alpha_{1}\}, \{\alpha_{n}\}, \{\alpha_{1}, \alpha_{2}\}, \{\alpha_{1}, \alpha_{n}\},
\{\alpha_{n-1}, \alpha_{n}\}.
\]
\item[(2)] $Z(c_{\mfu}(x(d)))$ is spanned by the power $x(d)^{k}$ of $x(d)$, provided that $\Delta(\mfm)$ is one of the following cases
\[
\{ \alpha_s\}, \{ \alpha_{r}, \alpha_{r+1}\}(1<r<n-1), \{ \alpha_{r},\alpha_{s} \}(1<r<s-1<n-1).
\]
\item[(3)] $Z(c_{\mfu}(x(d)))$ is spanned by $e_{2,n+1}$ together with $x(d)^{k}(1\leq k\leq n-2)$, provided that $\Delta(\mfm)=\{ \alpha_{1}, \alpha_{s} \}$ and
$2<s<n$.
\item[(4)]  $Z(c_{\mfu}(x(d)))$ is spanned by  $e_{1,n+1}$ together with $x(d)^{k}(1\leq k\leq n-2)$, provided that $\Delta(\mfm)=\{ \alpha_{r}, \alpha_{n} \}$ and $1<r<n-1$.
\end{itemize}
Moreover, the centralizer $c_{\mfu}(x(d))$ of $x(d)$ can be characterized by the following Table \ref{Tab1}
\def\arraystretch{2}
\begin{table}[ht]{\label{Tab1}}
\centering
\begin{tabular}[b]{|c|c|c|c|}
\hline
$\Delta(\mfm)$  &   restrictions   &  $c_{\mfu}(x(d))$ & dimension \\
\hline \hline
$\alpha_{1}$    &   none   &  abelian & $n$ \\
\hline
$\alpha_{s}$    &   $1<s<n$   &  not abelian  & $n+1$\\
\hline
$\alpha_{n}$    &   none   &  abelian & $n$ \\
\hline
$\alpha_{1},\alpha_{2}$     & none   & abelian & $n$  \\
\hline
$\alpha_{n-1},\alpha_{n}$   &   none   & abelian & $n$  \\
\hline
$\alpha_{r},\alpha_{r+1}$   &   $1<r<n-1$   & not abelian & $n+2$ \\
\hline
$\alpha_{1},\alpha_{n}$    &   none   & abelian & $n$  \\
\hline
$\alpha_{r},\alpha_{n}$          &   $1<r<n-1$   & not abelian  & $n+1$\\
\hline
$\alpha_{1},\alpha_{s}$          &   $2<s<n$   & not abelian & $n+1$  \\
\hline
$\alpha_{r},\alpha_{s}$          &   $1<r<s-1<n-1$   & not abelian & $n+2$  \\
\hline
\end{tabular}
\caption{Summarization for  $c_{\mfu}(x(d))$ }
\end{table}
\def\arraystretch{1}
\end{Cor}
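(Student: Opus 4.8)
The plan is to extract everything directly from the explicit bases of $c_{\mfu}(x(d))$ furnished by Theorems~\ref{Cent1}, \ref{Cent2} and \ref{Cent3}, which between them cover all standard parabolics with $1\le|\Delta(\mfm)|\le 2$ (the two Jordan types of Lemma~\ref{JF}(2) being split, via their proofs, between Theorems~\ref{Cent2} and \ref{Cent3} according to whether the two simple roots in $\Delta(\mfm)$ are adjacent). The dimension column of Table~\ref{Tab1} is then immediate: one counts the listed basis vectors, getting $1+(n-1)=n$ or $2+(n-1)=n+1$ when $|\Delta(\mfm)|=1$, and $2+(n-2)=n$, $3+(n-2)=n+1$ or $4+(n-2)=n+2$ when $|\Delta(\mfm)|=2$, exactly as tabulated (here one uses that the relevant large Jordan block has size $n$, resp.\ $n-1$, so the powers $x(d)^{k}$ occurring are non-zero and linearly independent, as already asserted in those theorems). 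Everything else is a bracket computation carried out with the single rule $e_{i,j}e_{k,l}=\delta_{j,k}e_{i,l}$, for which I first write $x(d)$ out explicitly as a sum of matrix units running along its one or two Jordan chains, read off from Lemma~\ref{BC}.

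Two observations make the center computation uniform. First, every power $x(d)^{k}$ is central in $c_{\mfu}(x(d))$: for a generator that is a single matrix unit $e_{a,b}$ from the lists, $a$ never occurs as a target and $b$ never as a source of a summand of $x(d)$, whence $x(d)e_{a,b}=e_{a,b}x(d)=0$; for the two-term generators (such as $e_{1,r+1}+e_{2,s+1}$ or $e_{r+1,n}+e_{s+1,n+1}$) appearing in Theorem~\ref{Cent3} the two summands separately have non-zero brackets with $x(d)$, but these are negatives of one another and cancel — which is precisely why those elements arise in the proof of Theorem~\ref{Cent3} as endomorphisms $Z$ pinned down by a single value $Z(v_i)$. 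Second, writing $m$ for one less than the size of the large Jordan block of $x(d)$ (so $m=n-1$ if $|\Delta(\mfm)|=1$ and $m=n-2$ if $|\Delta(\mfm)|=2$), every bracket of two non-power generators lies in $Kx(d)^{m}$, with the single further possibility $Kx(d)^{m-1}$ in case~(4) of Theorem~\ref{Cent3}; the alternating pairing this induces on the span of the non-power generators turns out to be non-degenerate except for a one-dimensional radical, spanned by $e_{2,n+1}$ when $\Delta(\mfm)=\{\alpha_1,\alpha_s\}$ with $2<s<n$, and by $e_{1,n+1}$ when $\Delta(\mfm)=\{\alpha_r,\alpha_n\}$ with $1<r<n-1$. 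Reading this pairing off case by case shows that $c_{\mfu}(x(d))$ is non-abelian exactly in the rows of Table~\ref{Tab1} marked ``not abelian'' — each time witnessed by a bracket of two generators equal to a non-zero power of $x(d)$ — and that $Z(c_{\mfu}(x(d)))$ is the span of the $x(d)^{k}$ together with the radical of the pairing, which is statements (2), (3) and (4).

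In the remaining, abelian cases — $\Delta(\mfm)\in\{\{\alpha_1\},\{\alpha_n\},\{\alpha_1,\alpha_2\},\{\alpha_{n-1},\alpha_n\},\{\alpha_1,\alpha_n\}\}$ — the two extra generators are matrix units $e_{a,b}$ whose first index is $\le 3$ and whose second index is $n$ or $n+1$; across any such pair the column of each exceeds the row of the other, so they annihilate one another under multiplication in both orders, and by the index argument above they also commute with every $x(d)^{k}$. Hence $c_{\mfu}(x(d))$ is abelian and equals its own center, which is statement~(1), and this completes the ``abelian'' entries of Table~\ref{Tab1}.

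The only place demanding genuine care is the partition-$[2,n-1]$ family $\Delta(\mfm)=\{\alpha_r,\alpha_s\}$ with $r+1<s$, where $x(d)$ must be displayed as the sum of the long chain $e_{n+1}\to\cdots\to e_1$ (bypassing the two short-chain labels) and the length-two chain $e_{s+1}\to e_{r+1}$, and where the vanishing of $[x(d),e_{1,r+1}+e_{2,s+1}]$ and of $[x(d),e_{r+1,n}+e_{s+1,n+1}]$ must be seen through explicit cancellation; these, together with the handful of brackets among the four non-power generators, are finite bookkeeping verifications with no conceptual content. Collating the per-case conclusions yields the four bullet points and Table~\ref{Tab1}.
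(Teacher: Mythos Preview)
Your proposal is correct and is exactly the intended argument: the paper states this Corollary with no proof, treating it as an immediate consequence of the explicit bases in Theorems~\ref{Cent1}, \ref{Cent2} and \ref{Cent3}, and you have simply written out the bracket computations that justify this. Your organizing device---reducing the center computation to the radical of the alternating pairing on the non-power generators, with values in the span of one or two top powers of $x(d)$---is a clean way to package the case analysis, and your identification of the one-dimensional radicals $Ke_{2,n+1}$ and $Ke_{1,n+1}$ in parts (3) and (4) matches the paper's assertions exactly.
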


\section{Saturation rank for nilpotent radical  }
Keep the notations as above, that is $G=\SL_{n+1}(K)$, $P=P(d)$ 
the standard parabolic subgroup of $G$ given by the set $\Delta({\mfm})$, $\mfp(d)=\mrlie(P(d))$ the Lie algebra of $P(d)$ and 
$\mfu$ the nilpotent ideal of $\mfp(d)$.
For any arbitrary element $x$ in $V(\mfu)$, we define the set
\[ 
\mbE(r,\mfu)_{x}:= \{\mfe \in \mbE(r,\mfu) \mid x\in \mfe \}
\]
as a subset of $\mbE(r,\mfu)$ consisting of elements that contain $x$.
Since $\mbE(1,\mfu)_{x}\neq \emptyset$, the number
\[ 
\rk(\mfu)_{x}:=\mmax\{r\in \mbN \mid \mbE(r,\mfu)_{x} \neq \emptyset\},
\]
is called the \emph{local saturation rank} of $x$. The first step towards 
the determination of the saturation rank of $\mfu$ is (see Sect. 3.1 in \cite{Pan}):

\begin{Lem}{\label{FD}}
Let $\rk_{min}(\mfu)=\min\{ \rk(\mfu)_{x} \mid x\in V(\mfu)\}$. Then 
$\srk(\mfu)=\rk_{min}(\mfu)$.
\end{Lem}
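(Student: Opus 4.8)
The plan is to unwind both quantities to the same condition on $r$, namely that every point of the restricted nullcone $V(\mfu)$ lies on some $r$-dimensional elementary subalgebra of $\mfu$, and then to conclude that $\srk(\mfu)$ and $\rk_{min}(\mfu)$ are each the largest $r$ with that property. First I would record the containment $V_{\mbE(r,\mfu)}\subseteq V(\mfu)$, valid for every $r$, which holds because each $\mfe\in\mbE(r,\mfu)$ is by definition a subspace of $V(\mfu)$. Consequently the equality $V(\mfu)=V_{\mbE(r,\mfu)}$ appearing in the definition of $\srk(\mfu)$ is equivalent to the inclusion $V(\mfu)\subseteq V_{\mbE(r,\mfu)}$, which says precisely that for every $x\in V(\mfu)$ there is some $\mfe\in\mbE(r,\mfu)$ with $x\in\mfe$, that is, $\mbE(r,\mfu)_{x}\neq\emptyset$ for all $x\in V(\mfu)$.

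Next I would establish the monotonicity needed to pass from this condition to $\rk(\mfu)_{x}$: for a fixed $x\in V(\mfu)$, if $\mbE(r,\mfu)_{x}\neq\emptyset$ then $\mbE(r',\mfu)_{x}\neq\emptyset$ for every $1\leq r'\leq r$, since given $\mfe\in\mbE(r,\mfu)_{x}$ and any $r'$-dimensional subspace $\mfe'\subseteq\mfe$ containing $x$ one has $\mfe'\subseteq\mfe\subseteq V(\mfu)$ and $[\mfe',\mfe']\subseteq[\mfe,\mfe]=0$, so $\mfe'\in\mbE(r',\mfu)_{x}$. Hence $\mbE(r,\mfu)_{x}\neq\emptyset$ holds exactly when $r\leq\rk(\mfu)_{x}$. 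Combining this with the previous paragraph, $V(\mfu)=V_{\mbE(r,\mfu)}$ holds if and only if $r\leq\rk(\mfu)_{x}$ for every $x\in V(\mfu)$, i.e. if and only if $r\leq\rk_{min}(\mfu)$; taking the largest such $r$ yields $\srk(\mfu)=\rk_{min}(\mfu)$. To see that the relevant maxima are taken over nonempty, bounded sets of integers, note that $1\leq\rk(\mfu)_{x}\leq\dim\mfu$ for every $x$ — indeed any nonzero $x\in V(\mfu)$ spans a line $Kx\subseteq V(\mfu)$ since $(\lambda x)^{[p]}=\lambda^{p}x^{[p]}=0$, so $\mbE(1,\mfu)_{x}\neq\emptyset$ — whence $1\leq\rk_{min}(\mfu)\leq\dim\mfu$ and $r=1$ already satisfies $V(\mfu)=V_{\mbE(1,\mfu)}$.

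I do not expect a genuine obstacle: the statement is a formal consequence of the definitions, following the argument of [Pan, §3.1]. The only steps deserving attention are the monotonicity lemma of the second paragraph (the nontrivial half of the equivalence $\mbE(r,\mfu)_{x}\neq\emptyset\iff r\leq\rk(\mfu)_{x}$) and the bookkeeping guaranteeing that $\mbE(1,\mfu)\neq\emptyset$, which rests on $V(\mfu)$ being a cone. The proof will therefore be short.
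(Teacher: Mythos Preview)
Your argument is correct: unwinding the definitions, $V(\mfu)=V_{\mbE(r,\mfu)}$ is equivalent to $\mbE(r,\mfu)_{x}\neq\emptyset$ for all $x\in V(\mfu)$, and your monotonicity step shows this holds precisely when $r\leq\rk_{min}(\mfu)$. The paper does not actually supply its own proof of this lemma; it simply cites \cite[Sect.~3.1]{Pan}, and your write-up is exactly the kind of elementary definition-chasing one expects to find there.
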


We consider the set 
\[
\mcO_{rmin}(\mfu):=\{ x\in V(\mfu) \mid \rk(\mfu)_{x}=\rk_{min}(\mfu) \}, 
\]
which is an open subset of $V(\mfu)$ (See  Sect. 3.1 in \cite{Pan} ).
Lemma \ref{FD} does not give a complete determination of the saturation rank of 
$\mfu$ because it does not say what the possible elements of $\mcO_{rmin}(\mfu)$ are. Recall that the nilpotent ideal $\mfu$ is the union of its intersection with the nilpotent orbits in $\mfg$. The finiteness of the number of nilpotent orbits ensures that there is a unique orbit $\mcO$,
such that $\mcO\cap \mfu $ is a $P(d)$-orbit and open dense in $\mfu$.
We call $\mcO$ the Richardson orbit corresponding to $P(d)$.

\begin{Lem}{\label{SD}}
If $V(\mfu)=\mfu$, then the saturation rank of $\mfu$ is determined by the local saturation rank of elements in $\mcO\cap \mfu$;
that is $\srk(\mfu)=\rk(\mfu)_{e}, \forall e\in \mcO \cap \mfu$.
\end{Lem}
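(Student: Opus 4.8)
The plan is to combine Lemma \ref{FD} with the openness of $\mcO_{rmin}(\mfu)$, the density of the Richardson orbit, and the irreducibility of $\mfu$, and then to propagate the resulting value of the local saturation rank along the $P(d)$-action.

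First I would record the variety-theoretic input. As a variety $\mfu$ is an affine space $\mbA^{N}$ with $N=\dim\mfu$, hence irreducible. Under the hypothesis $V(\mfu)=\mfu$ the set $\mcO_{rmin}(\mfu)$ is an \emph{open} subset of $\mfu$ (by the cited Sect.~3.1 of \cite{Pan} it is open in $V(\mfu)$, and $V(\mfu)=\mfu$); moreover it is nonempty, since $\{\rk(\mfu)_{x}\mid x\in V(\mfu)\}$ is a nonempty set of positive integers and its minimum $\rk_{min}(\mfu)$ is attained. On the other hand $\mcO\cap\mfu$ is open and dense in $\mfu$ by Richardson's theorem. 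Two nonempty open subsets of an irreducible variety must meet, so there exists an element $e_{0}\in\mcO_{rmin}(\mfu)\cap(\mcO\cap\mfu)$, and for it Lemma \ref{FD} gives $\rk(\mfu)_{e_{0}}=\rk_{min}(\mfu)=\srk(\mfu)$.

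Second I would show that $x\mapsto\rk(\mfu)_{x}$ is constant along $P(d)$-orbits contained in $\mfu$. Since $\mfu$ is an ideal of $\mfp(d)$ and $P(d)$ normalizes its unipotent radical, the adjoint action of $P(d)$ stabilizes $\mfu$; it is compatible with the bracket and with the $[p]$-operation, hence it preserves both defining conditions $[\mfe,\mfe]=0$ and $\mfe\subseteq V(\mfu)$ of $\mbE(r,\mfu)$. Thus for $g\in P(d)$ the assignment $\mfe\mapsto g\cdot\mfe$ restricts to a bijection $\mbE(r,\mfu)_{x}\to\mbE(r,\mfu)_{g\cdot x}$, so $\mbE(r,\mfu)_{x}\neq\emptyset$ if and only if $\mbE(r,\mfu)_{g\cdot x}\neq\emptyset$, i.e. $\rk(\mfu)_{g\cdot x}=\rk(\mfu)_{x}$. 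Because $\mcO\cap\mfu$ is a single $P(d)$-orbit, every $e\in\mcO\cap\mfu$ has the form $g\cdot e_{0}$ for some $g\in P(d)$, whence $\rk(\mfu)_{e}=\rk(\mfu)_{e_{0}}=\srk(\mfu)$, which is the claim.

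I do not expect a serious obstacle here: the one place requiring care is the verification that $\mcO_{rmin}(\mfu)$ is nonempty and open \emph{inside $\mfu$} — precisely the point at which the hypothesis $V(\mfu)=\mfu$ is used — combined with the irreducibility of the affine space $\mfu$ that forces the intersection with the dense Richardson orbit to be nonempty; the remaining $P(d)$-equivariance step is a formal argument and should be stated only briefly.
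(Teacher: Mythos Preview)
Your proposal is correct and follows essentially the same approach as the paper's own proof: both use that $\mcO_{rmin}(\mfu)$ and $\mcO\cap\mfu$ are nonempty open subsets of the irreducible variety $\mfu$ (invoking $V(\mfu)=\mfu$ for the former), intersect them to find a common element, and then propagate the local saturation rank along the single $P(d)$-orbit $\mcO\cap\mfu$ via $P(d)$-equivariance. Your write-up is in fact slightly more detailed than the paper's in justifying the $P(d)$-equivariance step, but the argument is the same.
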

\begin{proof}
For any $e\in \mcO\cap \mfu$,  the orbit
$\mcO \cap \mfu=P(d)\cdot e$ forms an open subset of $\mfu$. 
The fact that $\mcO_{rmin}$ is open in $V(\mfu)$ implies that it is also open in $\mfu$ since $V(\mfu)=\mfu$. It is important to note that
$\mfu$ is irreducible, and therefore, the intersection $P(d)\cdot e \cap \mcO_{rmin}$ is indeed non-empty. 
It is observed that the adjoint action of $P(d)$ on $\mfu$ remains within
$\mfu$, thereby implying that the local saturation rank $\rk(\mfu)_{e}$ of $e$ is equal to that of $p\cdot e$ for any $p\in P(d)$. 
As a result, $P(d)\cdot e\subset \mcO_{rmin}$, and 
$\srk(\mfu)=\rk(\mfu)_{e}$ for any $e\in \mcO\cap \mfu$.
\end{proof}

\begin{Lem}{\label{ineq}}
Let $x(d)$ be a Richardson element and $c_{\mfu}(x(d))$ be its centralizer in $\mfu$. Assume that $V(\mfu)=\mfu$, then
\begin{itemize}
\item[(1)] If $c_{\mfu}(x(d))$ is abelian, then 
$\srk(\mfu)=\dim c_{\mfu}(x(d))$.
\item[(2)] If $c_{\mfu}(x(d))$ is not abelian, then 
$\dim Z(c_{\mfu}(x(d))) \leq \srk(\mfu)< \dim c_{\mfu}(x(d))$.
\end{itemize}
\end{Lem}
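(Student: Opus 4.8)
The plan is to combine Lemma~\ref{SD} with the elementary fact that for a nilpotent element $e$, the local saturation rank $\rk(\mfu)_{e}$ measures the maximal dimension of an abelian subalgebra of $c_{\mfu}(e)$ all of whose elements square to zero under the $[p]$-map. Since we assume $V(\mfu)=\mfu$, every element of $\mfu$ is $[p]$-nilpotent, so the $[p]$-condition is automatic and $\rk(\mfu)_{x(d)}$ is simply the maximal dimension of an abelian subalgebra of $\mfu$ containing $x(d)$; equivalently, by shifting, the maximal dimension of an abelian subalgebra of the centralizer $c_{\mfu}(x(d))$ that contains $x(d)$. By Lemma~\ref{SD}, $\srk(\mfu)$ equals this number, since $x(d)\in\mcO\cap\mfu$.

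First I would record the identity $\rk(\mfu)_{x(d)} = \max\{\dim \mathfrak{a} \mid \mathfrak{a}\subseteq c_{\mfu}(x(d))\text{ abelian subalgebra},\ x(d)\in\mathfrak{a}\}$. This follows because any $\mfe\in\mbE(r,\mfu)_{x(d)}$ is an $r$-dimensional abelian subspace (hence subalgebra, as $\mfu$ is nilpotent and $[\mfe,\mfe]=0$) of $\mfu$ containing $x(d)$; since $[\mfe,\mfe]=0$ and $x(d)\in\mfe$, every element of $\mfe$ commutes with $x(d)$, so $\mfe\subseteq c_{\mfu}(x(d))$; and conversely any such abelian $\mathfrak{a}$ lies in $V(\mfu)=\mfu$ and is a point of $\mbE(\dim\mathfrak{a},\mfu)_{x(d)}$.

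For part~(1), if $c_{\mfu}(x(d))$ is abelian then $c_{\mfu}(x(d))$ itself is the unique maximal abelian subalgebra of $\mfu$ containing $x(d)$ — indeed any abelian $\mathfrak{a}\ni x(d)$ satisfies $\mathfrak{a}\subseteq c_{\mfu}(x(d))$ by the centralizer characterization above, and $c_{\mfu}(x(d))$ is itself abelian, hence the maximum is attained at $\mathfrak{a}=c_{\mfu}(x(d))$. Thus $\rk(\mfu)_{x(d)}=\dim c_{\mfu}(x(d))$, and Lemma~\ref{SD} gives $\srk(\mfu)=\dim c_{\mfu}(x(d))$. For part~(2), when $c_{\mfu}(x(d))$ is not abelian, any abelian subalgebra $\mathfrak{a}$ of $c_{\mfu}(x(d))$ is a proper subspace (it cannot be all of $c_{\mfu}(x(d))$), so $\rk(\mfu)_{x(d)}<\dim c_{\mfu}(x(d))$; for the lower bound, note that $Z(c_{\mfu}(x(d)))$ is itself an abelian subalgebra of $c_{\mfu}(x(d))$ which contains $x(d)$ — here one uses that $x(d)\in Z(c_{\mfu}(x(d)))$, equivalently that $x(d)$ is central in its own centralizer, which holds since $x(d)$ commutes with everything in $c_{\mfu}(x(d))$ by definition — and therefore $\rk(\mfu)_{x(d)}\geq\dim Z(c_{\mfu}(x(d)))$. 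Applying Lemma~\ref{SD} again yields $\dim Z(c_{\mfu}(x(d)))\leq\srk(\mfu)<\dim c_{\mfu}(x(d))$.

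The main obstacle is justifying the centralizer characterization of $\rk(\mfu)_{x(d)}$ cleanly, in particular the claim that an $r$-plane $\mfe$ lying in $\mbE(r,\mfu)$ and containing $x(d)$ must be contained in $c_{\mfu}(x(d))$ and must itself be a subalgebra — this is where the hypotheses $[\mfe,\mfe]=0$ and $V(\mfu)=\mfu$ (so that $\mfe\subseteq V(\mfu)$ is automatic and does not constrain us) are used, and one should be careful that no further $[p]$-semilinearity condition intervenes. Once this dictionary between $\mbE(r,\mfu)_{x(d)}$ and abelian subalgebras of $c_{\mfu}(x(d))$ through $x(d)$ is in place, both inequalities are immediate, and the reduction to $x(d)$ via Lemma~\ref{SD} does the rest.
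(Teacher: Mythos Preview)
Your proposal is correct and follows essentially the same approach as the paper: both arguments identify $\rk(\mfu)_{x(d)}$ with the maximal dimension of an abelian subalgebra of $c_{\mfu}(x(d))$ containing $x(d)$ (using $V(\mfu)=\mfu$ to drop the $[p]$-condition), sandwich this between $\dim Z(c_{\mfu}(x(d)))$ and $\dim c_{\mfu}(x(d))$, and then invoke Lemma~\ref{SD}. The only cosmetic difference is that the paper obtains the lower bound by observing that a \emph{maximal} elementary $\mfe$ through $x(d)$ must contain the center (since $\mfe+Z(c_{\mfu}(x(d)))$ is again abelian), whereas you simply exhibit $Z(c_{\mfu}(x(d)))$ directly as an elementary subalgebra through $x(d)$; both are immediate.
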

\begin{proof}
It is observed that any $\mfe\in \mbE(\rk(\mfu)_{x(d)},\mfu)_{x(d)}$ is contained in $c_{\mfu}(x(d))$ and contains the center $Z(c_{\mfu}(x(d)))$.Thus, we have 
\[
\dim Z(c_{\mfu}(x(d)))\leq \rk(\mfu)_{x(d)} \leq \dim c_{\mfu}(x(d)).
\]
We immediately deduce the results in viewing of $\srk(\mfu)=\rk(\mfu)_{x(d)}$ by Lemma \ref{SD} since $x(d)$ belongs to $\mcO\cap \mfu$.
\end{proof}

We say a standard parabolic subgroup $P(d)$ of $G$ is restricted provided that $\mfu\subseteq V(\mfg)$, or, equivalently, that 
$\mcO \subseteq V(\mfg)$(cf. \cite{CLNP}).

\setcounter{case}{0}
\begin{Thm}{\label{s-r}}
Let $\mfp(d)$ be a parabolic subalgebra of $\mfsl_{n+1}(K)$ 
with $|\Delta(\mfm)|\leq 2$, and $\mfu$ be the nilpotent ideal of $\mfp(d)$. If $p\geq n+1$ or $P(d)$ is restricted, then the following statements hold:
\begin{itemize}
\item[(1)] The saturation rank of $\mfu$ is $n$.
\item[(2)]  Any maximal elementary subalgebra associated with a Richardson element $x(d)$ is either unique or parametrized by points of 
$\mathbb{P}^{1}$ or $\mathbb{P}^{1}\times \mathbb{P}^{1}$.
\item[(3)] The variety $\mbE(n,\mfu)_{x(d)}$ has a dimension of at most 2.
\end{itemize}
Moreover, we give the characterization of the variety $\mbE(n,\mfu)_{x(d)}$
in Table \ref{Tab2}.
\def\arraystretch{2}
\begin{table}[ht]{\label{Tab2}}
\centering
\begin{tabular}[b]{|c|c|c|c|}
\hline
$\Delta(\mfm)$  &   restrictions   &  $\mbE(n,\mfu)_{x(d)}$ & dimension \\
\hline \hline
$\alpha_{1}$    &   none   &  singleton & $0$ \\
\hline
$\alpha_{s}$    &   $1<s<n$   &  $\mathbb{P}^{1}$  & $1$\\
\hline
$\alpha_{n}$    &   none   &  singleton  & $0$ \\
\hline
$\alpha_{1},\alpha_{2}$     & none   & singleton & $0$  \\
\hline
$\alpha_{n-1},\alpha_{n}$   &   none   & singleton & $0$  \\
\hline
$\alpha_{r},\alpha_{r+1}$   &   $1<r<n-1$   & $\mathbb{P}^{1}\times \mathbb{P}^{1}$ & $2$ \\
\hline
$\alpha_{1},\alpha_{n}$    &   none   & singleton & $0$  \\
\hline
$\alpha_{r},\alpha_{n}$  &   $1<r<n-1$   & $\mathbb{P}^{1}$  & $1$\\
\hline
$\alpha_{1},\alpha_{s}$   &   $2<s<n$   & $\mathbb{P}^{1}$ & $1$  \\
\hline
$\alpha_{r},\alpha_{s}$   &   $1<r<s-1<n-1$   & $\mathbb{P}^{1}\cup \mathbb{P}^{1}$ & $1$  \\
\hline
\end{tabular}
\caption{Summarization for  $\mbE(n,\mfu)_{x(d)}$ }
\end{table}
\def\arraystretch{1}
\end{Thm}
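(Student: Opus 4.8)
The plan is to combine Lemma~\ref{ineq} with the case-by-case description of $c_{\mfu}(x(d))$ and $Z(c_{\mfu}(x(d)))$ obtained in Theorems~\ref{Cent1}--\ref{Cent3} and summarized in Corollary~\ref{sum}. First I would check that the hypothesis $V(\mfu)=\mfu$ of Lemma~\ref{SD} and Lemma~\ref{ineq} is actually satisfied under either of our two standing assumptions: if $P(d)$ is restricted this is the definition; if $p\geq n+1$ then $x^{[p]}=0$ for every $x\in\mfu$ because $x$ is an $(n+1)\times(n+1)$ nilpotent matrix, hence $x^{p}=0$ as an endomorphism of $\mbV$, so $\mfu=V(\mfu)$. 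With this in hand, Lemma~\ref{SD} gives $\srk(\mfu)=\rk(\mfu)_{x(d)}$, and it suffices to compute this local rank for a single Richardson element $x(d)$ in each of the ten configurations of $\Delta(\mfm)$.

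Next I would dispose of the abelian cases. Whenever $c_{\mfu}(x(d))$ is abelian --- by Table~\ref{Tab1} this happens for $\Delta(\mfm)\in\{\{\alpha_1\},\{\alpha_n\},\{\alpha_1,\alpha_2\},\{\alpha_{n-1},\alpha_n\},\{\alpha_1,\alpha_n\}\}$ --- part~(1) of Lemma~\ref{ineq} immediately gives $\srk(\mfu)=\dim c_{\mfu}(x(d))$, and Table~\ref{Tab1} records this dimension as $n$ in every abelian case. Moreover an abelian centralizer contains a unique maximal elementary subalgebra, namely $c_{\mfu}(x(d))$ itself, so $\mbE(n,\mfu)_{x(d)}$ is a singleton, giving the corresponding rows of Table~\ref{Tab2}. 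For the non-abelian cases the dimension of $c_{\mfu}(x(d))$ is $n+1$ (for $\{\alpha_s\}$ with $1<s<n$, for $\{\alpha_r,\alpha_n\}$, for $\{\alpha_1,\alpha_s\}$) or $n+2$ (for $\{\alpha_r,\alpha_{r+1}\}$ and $\{\alpha_r,\alpha_s\}$), while $\dim Z(c_{\mfu}(x(d)))$ is $n-1$ when the center is spanned only by powers of $x(d)$, and $n$ in the two mixed cases of Corollary~\ref{sum}(3),(4). So Lemma~\ref{ineq}(2) pins $\srk(\mfu)=\rk(\mfu)_{x(d)}$ between these bounds, and to prove it equals $n$ I must produce, inside $c_{\mfu}(x(d))$, an $n$-dimensional abelian subalgebra contained in $V(\mfg)$ and containing $x(d)$, and then show no $(n+1)$- or $(n+2)$-dimensional such subalgebra exists.

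The construction of the $n$-dimensional elementary subalgebras is the concrete heart of the argument. Using the explicit bases from Theorems~\ref{Cent1}--\ref{Cent3}, in each non-abelian case I would write down a candidate: span of $\{x(d),x(d)^{2},\dots,x(d)^{n-2}\}$ together with two of the extra generators chosen so that they commute with each other, e.g. $\{e_{1,s+1},e_{s+1,n+1}\}$ for $\Delta(\mfm)=\{\alpha_s\}$, or the pair $\{e_{1,n+1},e_{2,n}\}$-type choices in the two-simple-root cases --- actually one needs a full $\mbP^{1}$-family or $\mbP^{1}\times\mbP^{1}$-family of such choices, which is exactly what Table~\ref{Tab2} predicts. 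Concretely, for $\Delta(\mfm)=\{\alpha_s\}$ the "variable" part is the line spanned by $\lambda e_{1,s+1}+\mu e_{s+1,n+1}$ inside the two-dimensional complement of $\langle x(d)^{k}\rangle$ in $c_{\mfu}(x(d))$; these rank-one directions commute with everything and square to zero (their products land outside $\mfu$ or vanish), so each gives an $n$-dimensional elementary subalgebra, and distinct lines give distinct subalgebras, yielding $\mbE(n,\mfu)_{x(d)}\cong\mbP^{1}$. The analogous bookkeeping in the $|\Delta(\mfm)|=2$ cases produces $\mbP^{1}$, $\mbP^{1}\times\mbP^{1}$, or a union $\mbP^{1}\cup\mbP^{1}$ according to how the quadratic "commuting and self-annihilating" conditions on the four extra generators factor. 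The main obstacle --- and the step deserving the most care --- is the \emph{upper} bound: showing that in the non-abelian cases there is genuinely no $(n+1)$-dimensional (resp. $(n+2)$-dimensional) elementary subalgebra containing $x(d)$. This amounts to checking that any abelian subalgebra of $c_{\mfu}(x(d))$ strictly larger than the $n$-dimensional ones we built must contain two of the extra generators whose Lie bracket is a nonzero multiple of some $x(d)^{k}$ --- forcing non-commutativity --- which one reads off directly from the multiplication table of the quiver algebra $\mrrad KQ(d)/J$ via Theorem~\ref{map1}, or equivalently from the matrix products $e_{a,b}e_{c,d}=\delta_{b,c}e_{a,d}$. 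Assembling these computations row by row gives Table~\ref{Tab2}, statement~(3) is then the observation that every entry of the last column is $\leq 2$, and statement~(2) is the restatement of the geometric shape of $\mbE(n,\mfu)_{x(d)}$ in each case.
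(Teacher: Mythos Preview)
Your proposal is correct and follows essentially the same route as the paper: reduce to the Richardson element via Lemma~\ref{SD}, invoke Lemma~\ref{ineq} together with the explicit bases of $c_{\mfu}(x(d))$ from Theorems~\ref{Cent1}--\ref{Cent3} and Corollary~\ref{sum}, dispatch the abelian centralizers immediately, and in each non-abelian case write down the $\mbP^{1}$- or $\mbP^{1}\times\mbP^{1}$-family of $n$-dimensional abelian subalgebras by hand. One small slip: in the $\Delta(\mfm)=\{\alpha_{s}\}$ case the powers run up to $x(d)^{n-1}$, not $x(d)^{n-2}$, so the candidate is the span of $x(d),\dots,x(d)^{n-1}$ together with \emph{one} line $\lambda e_{1,s+1}+\mu e_{s+1,n+1}$ (which you in fact say correctly two sentences later); your ``$n-2$ powers plus two extras'' template applies only to the $|\Delta(\mfm)|=2$ rows. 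Otherwise your outline is, if anything, more explicit than the paper's on two points it leaves implicit: the verification that $V(\mfu)=\mfu$ under either hypothesis, and the upper-bound check that no abelian subalgebra of $c_{\mfu}(x(d))$ containing $x(d)$ has dimension exceeding $n$.
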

\begin{proof}
In light of Corollary \ref{sum}, there are five types of $\Delta(\mfm)$
for which the centralizer $c_{\mfu}(x(d))$ is abelian and has dimension $n$. Consequently, the maximal elementary subalgebra containing $x(d)$ is $c_{\mfu}(x(d))$ itself, and by Lemma \ref{ineq}, $\srk(\mfu)=n$ when
$\Delta(\mfm)$ is as follows: 
\[
\{\alpha_{1}\}, \{\alpha_{n}\}, \{\alpha_{1}, \alpha_{2}\},  \{\alpha_{1}, \alpha_{n}\}, \{\alpha_{n-1}, \alpha_{n}\}.
\]
For the remaining cases, we will examine each one individually.
\begin{case}
$\Delta(\mfm)=\{ \alpha_{s} \}$ with $1<s<n$. The maximal elementary subalgebras containing $x(d)$ are
\[
\bigoplus_{i=1}^{n-1}Kx(d)^{i}\oplus K(ae_{1,s+1}+be_{s+1,n+1}) 
\]
which are  parametrized by points  $(a:b)\in  \mathbb{P}^{1}$.  
Consequently, the saturation rank is $n$, and the variety 
$\mbE(n, \mfu)_{x(d)}$ is irreducible with dimension 1.
\end{case}
\begin{case}
$\Delta(\mfm)=\{\alpha_{r},\alpha_{r+1}\}$   with  $1<r<n-1$.  The maximal elementary subalgebras containing $x(d)$ are
\[
\bigoplus_{i=1}^{n-2}Kx(d)^{i}\oplus K(ae_{1,r+1}+be_{r+1,n+1})\oplus K(a^{'}e_{1,r+2}+b^{'}e_{r+2,n+1})
\]
\end{case}
where $(a:b),(a^{'}:b^{'})\in \mathbb{P}^{1}$ and  $\mbE(n,\mfu)_{x(d)}$ is isomorphic to $\mathbb{P}^{1}\times \mathbb{P}^{1}$. So,
the saturation rank   is $n$ and the variety $\mbE(n,\mfu)_{x(d)}$ is irreducible with dimension 2.
\begin{case}
$\Delta(\mfm)=\{\alpha_{r},\alpha_{n}\}$  with  $1<r<n-1$. The maximal elementary subalgebras containing $x(d)$ are 
\[
\bigoplus_{i=1}^{n-2}Kx(d)^{i}\oplus K(a(e_{1,r+1}+e_{2,n+1})+be_{r+1,n})\oplus Ke_{1,n+1}
\]
which are  parametrized by points  $(a:b)\in  \mathbb{P}^{1}$.  Hence, the saturation rank   is $n$ and the variety $\mbE(n,\mfu)_{x(d)}$ is irreducible with dimension 1.
\end{case}
\begin{case}
$\Delta(\mfm)=\{\alpha_{1},\alpha_{s}\}$ with $2<s<n$. The maximal elementary subalgebras containing $x(d)$ are
\[
\bigoplus_{i=1}^{n-2}Kx(d)^{i}\oplus K(a(e_{2,n}+e_{s+1,n+1})+be_{1,s+1}) \oplus Ke_{2,n+1}
\]
which are  parametrized by points  $(a:b)\in  \mathbb{P}^{1}$.  Hence, the saturation rank is $n$  and  the variety $\mbE(n,\mfu)_{x(d)}$ is irreducible with dimension 1.
\end{case}
\begin{case}
$\Delta(\mfm)=\{\alpha_{r}, \alpha_{s}\}$  with  $1<r<s-1<n-1$.  
There exist two types of $n$-dimensional maximal elementary subalgebras  
\[
\bigoplus_{i=1}^{n-2}Kx(d)^{i}\oplus K(a(e_{1,r+1}+e_{2,s+1})+be_{r+1,n+1}) \oplus Ke_{1,s+1},
\]
\[
\bigoplus_{i=1}^{n-2}Kx(d)^{i}\oplus K(a^{'}(e_{r+1,n}+e_{s+1,n+1})+b^{'}e_{1,s+1}) \oplus Ke_{r+1,n+1}
\]
containing $x(d)$, both parametrized by points in  $\mathbb{P}^{1}$. 
Therefore, the saturation rank is $n$,  and the variety $\mbE(n,\mfu)_{x(d)}$ can be viewed as the union of two copies of $\mathbb{P}^{1}$, denoted as 
$\mathbb{P}^{1}\cup \mathbb{P}^{1}$. 
\end{case}
\end{proof}

\begin{remark}
We give two remarks concerning Theorem \ref{s-r}.
\begin{itemize}
\item[1.] As illustrated in Remark \ref{r1}, for $\mfp(d)\subseteq\mfsl_{9}(K)$ and $d=(3,2,3,1)$, the Richardson element is
$x(d)=e_{1,4}+e_{4,6}+e_{6,9}+e_{2,5}+e_{5,7}+e_{3,8}$ and there exists a maximal elementary subalgebra
\[
\bigoplus_{i=1}^{3}Kx(d)^{i}\oplus K(e_{1,5}+e_{4,7})\oplus
Ke_{1,7}\oplus Ke_{1,8}\oplus Ke_{2,8}\oplus Ke_{2,9}\oplus Ke_{3,7}\oplus Ke_{3,9}
\]
with dimesion $10$. Thereby, Theorem \ref{s-r}(1) may not hold for 
$|\Delta(\mfm)| > 2$.
\item[2.]Our fundamental application of  saturation rank determined in Theorem \ref{s-r} is given by the indecomposability of Carlson modules $L_{\zeta}$. 
Let $(P_{n},d_{n})_{n\geq 0}$ be a  minimal projective resolution of the trivial $U_{0}(\mfu)$-module $K$. Then 
\[
Hom(\Omega^{n}(K).K) \rightarrow  H^{n}(\mfu,K), \;\;\; \hat{\zeta} \mapsto  [\hat{\zeta} \circ d_{n}],
\]
is an isomorphism. If $\zeta:=[\hat{\zeta} \circ d_{n}]\neq 0$, then the Carlson module is defined as $L_{\zeta}:=\Ker\hat{\zeta}\subseteq \Omega^{n}(K)$.
By virtue of Theorem 6.4.4 in \cite{Far}, under the conditions given in Theorem \ref{s-r},  if $n\geq 2$ and $\zeta\neq 0 $ has odd degree, then $L_{\zeta}$ is indecomposable.
\end{itemize}
\end{remark}

\end{document}